\newtheorem{theorem}{Theorem}[section]
\newtheorem{lemma}[theorem]{Lemma} 
\newtheorem{proposition}[theorem]{Proposition} 
\newtheorem{definition}[theorem]{Definition} 
\newtheorem{problem}[theorem]{Problem} 
\newtheorem{corollary}[theorem]{Corollary}
\numberwithin{equation}{section}
\def\eps{{\varepsilon}}
\def\bft{{\bf t}}
\def\cL{{\mathcal L}}
\def\F{{\mathbb F}}
\def\K{{\mathbb K}}
\def\tiM{{\widetilde M}}
\def\tiP{{\widetilde P}}
\def\tiQ{{\widetilde Q}}
\def\tiR{{\widetilde R}}
\def\tiU{{\widetilde U}}
\def\tiV{{\widetilde V}}
\def\tiW{{\widetilde W}}
\def\tiZ{{\widetilde Z}}
\def\cA{{\mathcal A}}
\def\cL{{\mathcal L}}
\def\cM{{\mathcal M}} 
\def\cN{{\mathcal N}}
\begin{document}

\title[Thue--Morse continued fractions in characteristic $2$]{The Thue--Morse continued fractions in characteristic $2$  are algebraic}

\author{Yann Bugeaud}
\address{Universit\'e de Strasbourg, Math\'ematiques,
7, rue Ren\'e Descartes, 67084 Strasbourg  (France)}
\address{Institut universitaire de France}
\email{bugeaud@math.unistra.fr}

\author{Guo-Niu Han}
\address{I.R.M.A., UMR 7501, Universit\'e de Strasbourg
et CNRS, 7 rue Ren\'e Descartes, 67084 Strasbourg Cedex, France}
\email{guoniu.han@unistra.fr}

\begin{abstract}
Let $a, b$ be distinct, non-constant polynomials in $\F_2 [z]$. 
Let $\xi_{a, b}$ be the power series in $\F_2((z^{-1}))$ whose sequence 
of partial quotients is the Thue--Morse sequence over $\{a, b\}$. 
We establish that $\xi_{a,b}$ is algebraic of degree~$4$. 
\end{abstract}

\subjclass[2010]{11J70, 11A55, 11J61, 11T55}
\keywords{Power series field, Continued fraction, Transcendence}    

\maketitle


\section{Introduction}\label{intro}

An infinite sequence $(a_n)_{n \ge 0}$ over a finite alphabet $\cA$ is {\it $k$-automatic} 
for an integer $k \ge 2$ if it can be generated by a finite automaton which reads
the representation in base $k$ of a non-negative integer $n$ from right to left 
and outputs an element $a_n$ in $\cA$. It is called {\it automatic} if it is $k$-automatic for some integer 
$k \ge 2$. 
In 1968, Cobham \cite{Cob68} conjectured that, for any integer $b \ge 2$, the
base-$b$ expansion of an irrational real number never forms an automatic sequence. 
This was confirmed in 2007 by Adamczewski and Bugeaud \cite{AdBu07a} (see 
\cite{Ph15,AdFa17} for alternative proofs), who also established that if the Hensel 
expansion of an irrational $p$-adic number $\xi$ is an automatic sequence, then $\xi$ is transcendental. 
In a similar spirit, Bugeaud \cite{Bu13} proved that the sequence of partial 
quotients of an algebraic real number of degree at least $3$ never forms an automatic sequence. 


Analogous questions can be asked for power series over a finite field, but their answers 
are different.
Throughout the paper, 
we let $q$ be a power of a prime number $p$ and $\F_q$ denote the field with $q$ elements. 
In 1979 Christol \cite{Chr79,CKMFR} 
established that a power series in $\F_q((z^{-1}))$ is algebraic over $\F_q(z)$ 
if and only if the sequence of its coefficients is $q$-automatic (or, equivalently, 
is $p$-automatic). 


In analogy with the continued fraction algorithm for real numbers, there is a 
well-studied continued fraction algorithm for power series in $\F_q((z^{-1}))$, 
the partial quotients being nonconstant polynomials in $\F_q[z]$. 
In both settings, eventually periodic expansions correspond to quadratic elements, but 
much more is known on the continued fraction expansion of algebraic power series 
than on that of algebraic real numbers. 
In 1949 Mahler \cite{Mah49} established that, for $p \ge 3$, the root 
$$
z^{-1} + z^{-p} + z^{-p^2} + \ldots 
$$
in $\F_q((z^{-1}))$ of the polynomial $z X^p - z X + 1$ 
has unbounded partial quotients, in the sense that their degrees are unbounded. 
In the opposite direction, there exist
power series which are algebraic of degree at least $3$ and have bounded 
partial quotients (recall that analogous statements have not yet been proved, nor disproved, for 
real numbers): in 1976, Baum and Sweet \cite{BaSw76} 
proved that the continued fraction expansion of the unique solution $\xi_{BS}$ in $\F_2((z^{-1}))$
of the equation
$$
z X^3 + X + z= 0
$$
has all its partial quotients of degree at most $2$.

At the end of the 80s, Mend\`es France asked whether the sequence of 
partial quotients of an algebraic power series in $\F_q((z^{-1}))$ is $q$-automatic, as soon as 
it takes only finitely many different values. A positive answer has been 
given by Allouche \cite{Al88} and Allouche, B\'etr\'ema, and Shallit \cite{AlBeSh89} for 
the algebraic power series of degree at least $3$ in $\F_p ((z^{-1}))$ (here, $p \ge 3$) 
which have been 
constructed by Mills and Robbins \cite{MiRo86} and 
whose partial quotients are linear; see also Lasjaunias and Yao \cite{LasYao15,LasYao16,LasYao17}.      
However, in 1995 Mkaouar \cite{Mka95} (see also Yao \cite{Yao97} for an alternative proof) 
gave a negative answer to the question of Mend\`es France by
establishing that the sequence 
of partial quotients of the Baum--Sweet power series $\xi_{BS}$ is morphic, but not automatic. 
Recall that a sequence is called morphic if it is the image under a coding of a fixed point of a 
substitution. If the substitution can be chosen of constant length $k$, then the sequence 
is $k$-automatic. The equivalence between the two definitions of automatic sequences 
given here was established by Cobham \cite{Cob72}; see \cite{AlSh03}.


All this shows that the sequence of partial quotients of an algebraic power series may or may not 
be automatic. Conversely, very little is known about the Diophantine nature of a power series whose sequence of   
partial quotients is automatic, but not ultimately periodic. 
The only contribution to this question is a recent work of Hu and Han \cite{HuHan}. 
They proved that, for any distinct non-constant polynomials $a$ and $b$ in $\F_2[z]$ whose 
sum of degrees is at most $7$, the power series whose sequence of partial quotients is the 
Thue--Morse sequence written over $\{a, b\}$ is algebraic of degree~$4$. 
Recall that the Thue--Morse word
$$
{\mathbf t} = t_0 t_1 t_2 \ldots = abbabaabbaababbabaababbaabbabaab \ldots 
$$
over $\{a, b\}$ is defined by $t_0 = a$,
$t_{2k} = t_k$ and $t_{2k + 1} = a$ (resp., $b$) if $t_k = b$ (resp., $a$),  for $k \ge 0$. 
The Thue--Morse sequence is the most famous automatic sequence and is 
not ultimately periodic. 

To establish their result, 
Hu and Han made use of a Guess 'n' Prove method and implemented 
a program which takes a pair $(a, b)$ of distinct non-constant polynomials as input and outputs its 
minimal defining polynomial and a complete proof. The time needed grows with the degrees 
of $a$ and $b$.  Hu and Han conjectured that their result holds 
more generally for every pair $(a, b)$ of distinct non-constant polynomials.

In the present paper, built on \cite{HuHan}, we confirm this conjecture. 
Our main results are stated in Section \ref{Res} and proved in Sections \ref{proofmain} 
and \ref{proofautres}. We consider 
higher degree exponents of approximation in Section \ref{higher}. 
Additional remarks are gathered in Section \ref{conc}. 

\section{Results}   \label{Res} 

Let $a$ and $b$ be distinct non-constant polynomials in $\F_q [z]$ and let
$\xi_{q, a, b}$ denote the power series  in $\F_q((z^{-1}))$ 
whose sequence of partial quotients is the Thue--Morse sequence ${\mathbf t}$ over the alphabet $\{a, b\}$. 
Since ${\mathbf t}$ is not ultimately periodic, $\xi_{q, a, b}$ is transcendental or 
algebraic of degree at least $3$ over $\F_q (z)$. Furthermore, as ${\mathbf t}$ begins with infinitely 
palindromes, an argument given in the proof of Theorem \ref{thexp} 
shows that $\xi_{q, a, b}$ and its square are very well simultaneously approximable by 
rational fractions with the same denominator and that, consequently, $\xi_{q, a, b}$ cannot be 
algebraic of degree $3$. This proves that $\xi_{q, a, b}$ is transcendental or 
algebraic of degree at least $4$ over $\F_q(z)$.

Our main result asserts that, in the case $q=2$, any Thue--Morse continued fraction $\xi_{2,a,b} = \xi_{a,b}$
is algebraic of degree $4$ over $\F_2 (z)$. 

\begin{theorem} \label{mainth}
Let $a, b$ be non-constant, distinct polynomials in $\F_2 [z]$ and let 
$$
\xi_{a, b} = [0; a, b, b, a, b, a, a, b, b, a, a, b, a, b, b, a, \ldots ]
$$
be the power series  in $\F_2((z^{-1}))$ whose sequence of partial quotients is the Thue--Morse sequence 
${\mathbf t}$ over the alphabet $\{a, b\}$. Then, $\xi_{a, b}$ is algebraic of degree $4$ over $\F_2 (z)$.
More precisely, setting 
\begin{align*}
A_0&=b(a+b)(a^2 b^2 +a^2+b^2) + a^2 b^4, \\
A_1&=ab(a+b) (a^2 b^2 + a^2 + b^2), \\
A_2&=a^2 b^2 (a^2 b^2+a^2+b^2), \\
A_3&=A_1, \\
A_4&=a(a+b)(a^2 b^2 +a^2+b^2)+ a^2 b^4,
\end{align*}
we have 
$$
A_4 \xi_{a, b}^4 + A_3 \xi_{a, b}^3 + A_2 \xi_{a, b}^2 + A_1 \xi_{a, b} + A_0 = 0.
$$
\end{theorem}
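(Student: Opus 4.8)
The plan is to exploit the self-similar structure of the Thue--Morse word. Write $\xi = \xi_{a,b}$ and let $p_n/q_n$ be the convergents of its continued fraction expansion. The key combinatorial fact is that the Thue--Morse word $\mathbf{t}$ over $\{a,b\}$ satisfies a substitution identity: $\mathbf{t} = \sigma(\mathbf{t})$ where $\sigma(a)=ab$, $\sigma(b)=ba$; equivalently, the block $t_0 t_1 \cdots t_{2N-1}$ is obtained from $t_0 \cdots t_{N-1}$ by a controlled operation, and $t_0\cdots t_{2N-1}$ is (nearly) a palindrome. Concretely, I would track how the finite continued fractions $[0;t_1,\dots,t_n]$ behave under $n \mapsto 2n$. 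Using the standard matrix formalism, the matrix $\begin{pmatrix} t_1 & 1\\ 1 & 0\end{pmatrix}\cdots\begin{pmatrix} t_n & 1\\ 1 & 0\end{pmatrix}$ associated to the first $2^k$ partial quotients can be expressed in terms of the one for the first $2^{k-1}$ partial quotients and its ``mirror'' obtained by swapping $a\leftrightarrow b$. This is exactly the mechanism used in \cite{HuHan} for small degrees; the new input is to make it uniform in $(a,b)$.

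Next I would set up the functional equation. Let $\eta = \eta_{a,b}$ denote the power series with partial quotients given by the $a\leftrightarrow b$-swap of $\mathbf{t}$ (which is again essentially a Thue--Morse word, shifted). Because $\mathbf{t}$ begins with $ab\,ba\,ba\,ab\cdots$, one gets a finite-state relation of the form $\xi = F(\eta)$ and $\eta = G(\xi)$, where $F,G$ are explicit Möbius transformations with coefficients that are polynomials in $a,b$ (coming from finite products of the $2\times 2$ matrices above evaluated on the first few partial quotients). Composing, $\xi = (F\circ G)(\xi)$ would give a quadratic relation over $\F_2(\eta)$-coefficients — but this is not quite enough because $F,G$ themselves involve $\eta,\xi$ through longer and longer prefixes. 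The correct statement, following the Thue--Morse mirror symmetry, is a \emph{coupled} pair of equations relating $\xi$ and $\eta$ via transformations whose coefficients are \emph{fixed} polynomials in $a,b$ (degree of the prefix needed is bounded, because $\sigma$ has length $2$). Eliminating $\eta$ between the two coupled Möbius relations yields a degree-$4$ polynomial equation for $\xi$ over $\F_2[z]$.

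The final step is to verify that the eliminated equation is exactly $A_4\xi^4 + A_3\xi^3 + A_2\xi^2 + A_1\xi + A_0 = 0$ with the stated $A_i$, and that it is irreducible (so the degree is genuinely $4$, not $2$). Irreducibility: the discussion preceding the theorem already rules out degrees $1,2,3$ (non-periodicity kills $\le 2$, the palindrome/simultaneous-approximation argument of Theorem~\ref{thexp} kills $3$), so it suffices to check that $\xi$ does satisfy the quartic and that the quartic is not identically forced to factor — but actually once $\xi$ satisfies a quartic and cannot have degree $<4$, the quartic must be its minimal polynomial up to a scalar, so one only needs to confirm the coefficients. To pin down the $A_i$, I would compute both sides of the coupled Möbius relations to enough $z$-adic precision, or more cleanly, substitute the ansatz into the functional equation derived in step two and solve the resulting linear system for the $A_i$ symbolically in $\F_2[z]$; characteristic $2$ simplifies the algebra considerably (Frobenius: $(\,\cdot\,)^2$ is additive, so $\xi^4$ and $\xi^2$ interact linearly with squaring the coupled relation).

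The main obstacle I expect is \textbf{step one}: proving the uniform-in-$(a,b)$ matrix identity that encodes how the convergent-matrix for $2N$ partial quotients of a Thue--Morse word decomposes in terms of that for $N$ partial quotients and its $a\leftrightarrow b$ mirror, together with the precise ``gluing'' correction (the analogue of the continued-fraction folding lemma / Raney-type identity). In \cite{HuHan} this was done case-by-case with computer assistance; here it must be a clean closed-form identity valid for all non-constant distinct $a,b\in\F_2[z]$, and getting the boundary terms right — especially because continued-fraction concatenation $[0;u][0;v]$ requires sign/degree bookkeeping that behaves specially in characteristic $2$ — is where the real work lies. Once that identity is in hand, steps two and three are essentially forced, and the explicit $A_i$ fall out of a finite symbolic computation.
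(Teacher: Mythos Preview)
Your overall instinct—exploit the self-similarity of $\mathbf t$ via the length-$2$ substitution and couple $\xi=\xi_{a,b}$ with $\eta=\xi_{b,a}$—is natural, but the mechanism you propose does not close. Two genuine M\"obius relations $A_i\xi\eta+B_i\xi+C_i\eta+D_i=0$ ($i=1,2$) with coefficients in $\F_2[z]$ eliminate to a relation of degree at most~$2$ in $\xi$, not~$4$; so ``eliminating $\eta$ between the two coupled M\"obius relations yields a degree-$4$ equation'' cannot be right as stated. More fundamentally, no tail of $\mathbf t$ equals $\tilde{\mathbf t}$ (already $t_1t_2=bb$ while $\tilde t_0\tilde t_1=ba$), so there is no fixed prefix after which $\xi$ becomes a M\"obius image of $\eta$. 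The substitution $\sigma$ acts on the \emph{matrix product} by $M_a\mapsto M_aM_b$, $M_b\mapsto M_bM_a$, and this does not descend to a M\"obius map on the continued-fraction value. Your invocation of Frobenius at the end hints that you may want relations involving $\eta^2$ rather than $\eta$, but you never write down a candidate, and none is forced by the combinatorics you describe.

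The paper takes a different route with no functional equation at all. It \emph{guesses} the quartic (from the Hu--Han computations) and then proves directly that it vanishes at $\xi$ by showing that its value at the convergents $p_{4^k}/q_{4^k}$ is $O(|q_{4^k}|^{-2})$. The indices $4^k$ are chosen because the prefix of $\mathbf t$ of length $4^k$ is a palindrome, so the associated convergent matrix $M_k$ is symmetric; the recursion $M_k=M_{k-1}\tiM_{k-1}^{\,2}M_{k-1}$ (coming from $\sigma^2$, not $\sigma$) then drives a chain of inductive identities. The key technical device is an auxiliary quantity $Z_k$ (essentially a square root of the top-left entry of $M_k$) for which one proves, purely algebraically in $\F_2[a,b]$, a first-order recursion and then a uniform degree-$4$ relation with explicit coefficients. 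Substituting this relation into the homogenised quartic $\sum A_jP_k^jQ_k^{4-j}$ shows it is divisible by $(ab)^{2n_k}$, which is exactly the decay needed. The ``boundary terms'' you flag as the main obstacle are handled not by a folding lemma but by these $Z_k$-identities; that is where the real work lies, and it is not visible from the M\"obius/functional-equation picture you sketch.
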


\noindent {\it Remark. } 
In Theorem \ref{mainth} the ground field $\F_2$ can be replacd by any field $K$ of characteristic $2$, and 
$a$ and $b$ by any non-constant, distinct polynomials in $K[z]$. 

\medskip

We define a norm $| \cdot |$
on the field $\F_q ((z^{-1}))$ by setting $|0| = 0$ and,
for any non-zero power series 
$\xi = \xi(z) = \sum_{h=-m}^{+ \infty} \, a_h z^{-h}$ with $a_{-m} \not= 0$, 
by setting $|\xi| = q^m$. We write $|| \xi ||$ for the norm
of the fractional part of $\xi$, that is, of the part of the series
which comprises only the negative powers of $z$.

For a power series $\xi$ in $\F_q ((z^{-1}))$, let
$$
C(\xi)  = \liminf_{Q \in \F_q [z] \setminus \{0\}}  \,
|Q| \cdot \Vert Q \xi \Vert 
$$
denote its Lagrange constant. Clearly, $C(\xi)$ is an element of
$$
\cL_q = \{q^{-k} :  k \ge 1 \} \cup \{ 0 \},
$$
and it is positive if and only if the 
degrees of the partial quotients of $\xi$ are bounded. 

If $[a_0; a_1, a_2,  \ldots]$ denotes the continued fraction expansion of $\xi$ 
and $p_n / q_n = [a_0; a_1, a_2, \ldots, a_n]$ for $n \ge 1$, then 
$$
|q_n \xi - p_n| = |q_{n+1}|^{-1} = |a_{n+1} q_n|^{-1}.
$$
Consequently, we have
$$
C(\xi) = 2^{- \limsup_{n \to + \infty}  \deg a_n}. 
$$
Exactly as in \cite{Mah49}, we can check that the root $z^{-1} + z^{-4} + z^{-4^2} + \ldots$ in 
$\F_2 ((z^{-1}))$ of $z X^4 + z X + 1$ is algebraic of degree $4$ and has unbounded 
partial quotients, thus its Lagrange constant is $0$.    
Since the degrees of $a$ and $b$ can be arbitrarily chosen in Theorem \ref{mainth}, we 
derive at once the following corollary. 

\begin{corollary}
There exist algebraic power series of degree $4$ in $\F_2((z^{-1}))$ with an arbitrarily prescribed 
Lagrange constant in $\cL_2$. 
\end{corollary}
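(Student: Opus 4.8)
The plan is to combine Theorem \ref{mainth} with the explicit formula, recalled just above the corollary, that $C(\xi) = 2^{-\limsup_{n\to\infty}\deg a_n}$. The key observation is that the partial quotients of $\xi_{a,b}$ are, by construction, the entries of the Thue--Morse word over $\{a,b\}$, so each of $a$ and $b$ occurs infinitely often in the sequence $(a_n)_{n\ge 1}$. Consequently
$$
\limsup_{n\to\infty} \deg a_n = \max(\deg a, \deg b),
$$
and therefore $C(\xi_{a,b}) = 2^{-\max(\deg a,\deg b)}$.

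Now I would argue as follows. Fix any target value $2^{-k} \in \cL_2$ with $k \ge 1$; I must produce an algebraic power series of degree $4$ in $\F_2((z^{-1}))$ whose Lagrange constant equals $2^{-k}$. Simply choose two distinct non-constant polynomials $a, b \in \F_2[z]$ with $\max(\deg a, \deg b) = k$; for instance $a = z^k$ and $b = z^k + z$ will do (they are distinct, non-constant since $k\ge 1$, and both have degree $k$). By Theorem \ref{mainth}, the associated Thue--Morse continued fraction $\xi_{a,b}$ is algebraic of degree exactly $4$ over $\F_2(z)$, and by the computation of the previous paragraph its Lagrange constant is $2^{-k}$, as required. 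For the remaining value $C(\xi) = 0$ in $\cL_2$, the corollary is already witnessed by the example displayed just before its statement, namely the root of $zX^4 + zX + 1$, which is algebraic of degree $4$ and has unbounded partial quotients, hence Lagrange constant $0$.

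There is essentially no obstacle here: the corollary is an immediate harvest of Theorem \ref{mainth}, whose proof (carried out in Sections \ref{proofmain} and \ref{proofautres}) is the real work. The only point requiring a line of justification is that $\limsup \deg a_n$ equals $\max(\deg a,\deg b)$ rather than being smaller, and this follows at once from the fact that the letter realizing the larger degree appears infinitely often in the Thue--Morse word (indeed every factor of $\mathbf t$ recurs infinitely often, and in particular each single letter does). One might also remark that freedom in choosing $a,b$ shows the degree-$4$ algebraic power series realizing a given Lagrange constant is far from unique, but this is not needed for the statement.
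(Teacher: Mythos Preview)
Your argument is correct and follows exactly the route the paper takes: use the formula $C(\xi_{a,b})=2^{-\max(\deg a,\deg b)}$ together with Theorem~\ref{mainth} for the positive values $2^{-k}$, and invoke the Mahler-type root of $zX^4+zX+1$ for the value $0$. One small slip: your sample pair $a=z^k$, $b=z^k+z$ fails when $k=1$, since then $b=0$ in $\F_2[z]$; taking $b=z^k+1$ (or simply $a=z$, $b=z^k$) repairs this.
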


More generally, we can define the spectrum of  badly approximable 
power series $\xi$ in $\F_q ((z^{-1}))$ as the set of positive integers $k$ such that $\xi$ 
has infinitely many partial quotients of degree $k$. 

\begin{problem}  \label{pbsp} 
Let $\cN$ be a finite set of positive integers. 
Does there exist a badly approximable 
power series $\xi$ in $\F_q ((z^{-1}))$ which is algebraic of degree at least $3$ 
and whose spectrum is equal to $\cN$? 
\end{problem}

Theorem \ref{mainth} answers positively Problem \ref{pbsp} for $q=2$ and 
any set $\cN$ of cardinality $2$.

For a power $q$ of a prime number $p$
the class $H(q)$ of hyperquadratic power series in $\F_q ((z^{-1}))$ 
has attracted special attention; see e.g.  \cite{Las00,Schm00}. 
It is composed of the irrational power series $\xi$ in $\F_q ((z^{-1}))$ 
for which there exist polynomials $A, B, C, D$ in $\F_q [z]$ such that 
$A D - B C \not= 0$ and
$$
\xi = {A \xi^{p^s} + B \over C \xi^{p^s} + D}.
$$
Once a power series is known to be algebraic, a natural question is to determine whether it belongs to the 
restricted class of hyperquadratic power series. 

In another direction, since the pioneering works of Kolchin \cite{Kol59} and 
Osgood \cite{Os75}, 
formal derivation has been used to study rational approximation to power series. 
By differentiating with respect to $z$ the minimal defining polynomial satisfied by 
an algebraic power series $\xi$ in $\F_q ((z^{-1}))$ of degree $d$, we
see that $\xi'$, where the $'$ indicates the derivation on $\F_q((z^{-1}))$ with respect to $z$
extending the derivation on $\F_q[z]$, can be expressed 
as a polynomial in $\xi$ of degree at most $d-1$. 

We say that a power series $\xi$ in $\F_q ((z^{-1}))$ satisfies a Riccati differential equation if there are 
$A, B$ and $C$ in $\F_q (z)$ such that 
$$
\xi' = A \xi^2 + B \xi + C.
$$
Clearly, any cubic power series satisfies a Riccati equation. 
This is also the case for any hyperquadratic power series, but the 
converse does not hold; see e.g. \cite[Section 4]{LasBdM96}.

\begin{proposition}  \label{hypric} 
Let $a, b$ be non-constant, distinct polynomials in $\F_2 [z]$. 
Then, the Thue--Morse continued fraction 
$\xi_{a, b}$ in $\F_2((z^{-1}))$ is differential-quadratic: it satisfies the Riccati equation
$$
[(ab (a+b)) x]'= (ab)' (1+x^2).
$$
Furthermore, $\xi_{a, b}$ is not hyperquadratic. 
\end{proposition}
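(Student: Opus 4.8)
The plan is to deduce both assertions directly from the explicit quartic equation of Theorem \ref{mainth}, so the proposition becomes an exercise in formal differentiation in characteristic $2$. Write $\xi = \xi_{a,b}$ and start from $A_4 \xi^4 + A_3 \xi^3 + A_2 \xi^2 + A_1 \xi + A_0 = 0$. In characteristic $2$ the terms $\xi^4$ and $\xi^2$ are squares, hence constants for the derivation with respect to $z$, so differentiating the quartic kills $(A_4 \xi^4)' $ down to $A_4' \xi^4$ and $(A_2 \xi^2)'$ down to $A_2' \xi^2$; the genuinely delicate pieces are $(A_3 \xi^3)' = A_3' \xi^3 + A_3 \xi^2 \xi'$ and $(A_1 \xi)' = A_1' \xi + A_1 \xi'$ (using $(\xi^3)' = \xi^2 \xi'$ and $(\xi^2)' = 0$). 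Collecting terms, one gets a relation of the shape $(A_3 \xi^2 + A_1)\, \xi' = A_4' \xi^4 + A_3' \xi^3 + A_2' \xi^2 + A_1' \xi + A_0'$. Since $A_3 = A_1 = ab(a+b)(a^2b^2 + a^2 + b^2)$, the coefficient of $\xi'$ factors as $ab(a+b)(a^2b^2+a^2+b^2)(\xi^2+1)$, and I expect the right-hand side, after substituting the explicit $A_i$ and using the quartic once more to reduce, to factor with the common factor $(a^2 b^2 + a^2 + b^2)$ as well, leaving an identity equivalent to $[(ab(a+b))\xi]' = (ab)'(1+\xi^2)$. A clean way to organize this is to \emph{guess} the Riccati equation and verify it: set $P = ab(a+b)$, check that $[P\xi]' = P'\xi + P\xi'$ combined with $(ab)'(1+\xi^2)$ gives $P\xi' = (ab)'(1+\xi^2) + P'\xi = (ab)'(1+\xi^2) + (a^2 b + ab^2)'\xi$, i.e. $\xi' = A\xi^2 + B\xi + C$ with explicit $A,B,C \in \F_2(z)$, and then confirm that this first-order equation is compatible with the quartic by differentiating the quartic and substituting. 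The main obstacle is purely computational: carrying out the differentiation of the quartic, substituting the explicit $A_i$, and checking that everything collapses to the stated Riccati equation without sign or factorization errors — there are no conceptual difficulties, since in characteristic $2$ the derivation is $\F_2(z)$-linear and annihilates all squares.

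For the last assertion, that $\xi_{a,b}$ is \emph{not} hyperquadratic, the plan is to argue by contradiction using the structure of hyperquadratic elements. Suppose $\xi = (A\xi^{2^s} + B)/(C\xi^{2^s} + D)$ with $AD - BC \neq 0$ and $s \geq 1$; then $\xi$ satisfies a quadratic equation over $\F_2(z^{1/2^s}) $ composed with the Frobenius, and in particular $\xi$ has degree exactly $2$ over the field $\F_2(z, \xi^{2^s})$. Since $\xi$ is algebraic of degree $4$ over $\F_2(z)$ by Theorem \ref{mainth}, the tower $\F_2(z) \subset \F_2(z, \xi^{2^s}) \subset \F_2(z,\xi)$ forces $[\F_2(z,\xi^{2^s}):\F_2(z)] = 2$, so $\xi^{2^s}$ is quadratic over $\F_2(z)$; but $\xi^{2^s}$ generates the same extension as $\xi$ when $\gcd$ considerations are taken into account — more precisely, applying Frobenius $s$ times to the minimal quartic shows $\xi^{2^s}$ also has degree $4$ over $\F_2(z^{2^s}) = \F_2(z)$, a contradiction. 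The cleanest route is probably to observe that a hyperquadratic power series of degree $d$ must have $d$ a power of $2$ times the degree of its "quadratic part", and to combine the degree-$4$ statement with the fact that $\xi_{a,b}$ is genuinely quartic (not quadratic), ruling out $s \geq 1$; the case $s = 0$ would make $\xi$ quadratic, again excluded.

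The expected main obstacle is the bookkeeping in the first part: one must be careful that in characteristic $2$ the Leibniz rule still holds, $(fg)' = f'g + fg'$, so that $(A_i \xi^i)' = A_i' \xi^i + i A_i \xi^{i-1}\xi'$ with $i$ read modulo $2$, which is exactly what makes the odd-index terms survive and the even-index terms become $A_i'\xi^i$; getting the resulting linear-in-$\xi'$ equation and then factoring out $(a^2b^2+a^2+b^2)$ and $(\xi^2+1) = (\xi+1)^2$ correctly is where an error is most likely to creep in. For the non-hyperquadratic part, the subtlety is making the degree argument airtight — one should verify that $\xi_{a,b}$ really has degree $4$ and not $2$ (which is immediate since its partial quotient sequence is not eventually periodic, as noted in Section \ref{Res}), and that no choice of $s$ and of $A,B,C,D$ can produce a degree-$4$ relation of hyperquadratic type; here invoking the known classification of low-degree hyperquadratic series, or a direct valuation/Frobenius argument on the minimal polynomial, should suffice.
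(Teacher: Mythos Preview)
Your approach to the Riccati equation is essentially the paper's: differentiate the quartic and exploit that even powers of $\xi$ are Frobenius-fixed. What you are missing, and what makes the computation collapse instantly, are three specific observations: $A_2$ is a perfect square (so $A_2'=0$), $A_0+A_4=(a+b)^2(ab+a+b)^2$ is a perfect square (so $A_0'=A_4'$), and $A_1=A_3$. With these, the derivative of the quartic reads
\[
(A_0)'(1+\xi^4)+(A_1)'(\xi+\xi^3)+A_1(1+\xi^2)\xi'=0,
\]
and one divides through by $(1+\xi^2)=(1+\xi)^2$ to get $(A_0)'(1+\xi^2)=(A_1\xi)'$, which is exactly the stated Riccati equation after cancelling the common factor $(ab+a+b)^2$. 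No ``use the quartic once more to reduce'' is needed. So for this half you have the right plan; you just have not spotted the algebraic identities that make it a three-line computation rather than a slog.

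Your argument for non-hyperquadraticity, however, has a genuine gap. The assertion that $\xi$ has degree $2$ over $\F_2(z,\xi^{2^s})$ is false: the hyperquadratic relation $\xi(C\xi^{2^s}+D)=A\xi^{2^s}+B$ expresses $\xi^{2^s}$ as a M\"obius transform of $\xi$ (and conversely), so $\F_2(z,\xi^{2^s})=\F_2(z,\xi)$ and the tower collapses. The claim $\F_2(z^{2^s})=\F_2(z)$ is also false. More fundamentally, degree considerations alone cannot rule out hyperquadraticity here: for $s\ge 2$ the polynomial $CX^{2^s+1}+DX^{2^s}+AX+B$ has degree $\ge 5$ and could in principle be divisible by a quartic, so nothing prevents a degree-$4$ element from being hyperquadratic \emph{a priori}. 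The paper's proof is entirely different and uses the specific coefficients $A_0,\dots,A_4$: if the minimal quartic divided $CX^{2^s+1}+DX^{2^s}+AX+B$, the coefficients of $X^2,\dots,X^{2^s-1}$ in the quotient would satisfy a homogeneous linear system whose matrix is a $(2^s-2)\times(2^s-2)$ pentadiagonal Toeplitz matrix built from $A_0,\dots,A_4$. The paper computes that $A_2$ is the unique $A_i$ containing the monomial $a^4b^4$, so the determinant contains the monomial $(a^4b^4)^{2^s-2}$ with no cancellation, hence is nonzero, forcing the trivial solution. You will need an argument of this concrete, coefficient-dependent kind; a purely Galois-theoretic or degree-counting approach does not suffice.
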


\noindent {\it Remark. } 
The Riccati equation in Proposition \ref{hypric} is equivalent to
$[(ab (a+b)) x]'= [ab (1+x^2)]'$. Its solutions are the power series $\xi$ 
such that $ab(a+b) \xi + ab(1+ \xi^2)$  is a square, that is, such that
$1+(a+b) \xi + \xi^2$ is a linear combinaison of terms $a^i b^j$, with $i$ and $j$ odd. 

\medskip

The strategy of the proof of Theorem \ref{mainth} is the following. 
After a careful study of the minimal defining polynomials $P_{a,b} (X)$ of $\xi_{a, b}$ found 
by Hu and Han \cite{HuHan} in the case where the sum of the degrees of $a$ and $b$ is at 
most $7$, we have guessed the coefficients, expressed in terms of $a$ and $b$, 
of the quartic polynomial $P_{a, b} (X)$ which vanishes at $\xi_{a, b}$, for {\it any} distinct, non-constant 
polynomials $a$ and $b$. It then only remained for us to 
check that $P_{a, b} (\xi_{a, b}) = 0$. 
This step is, however, much more difficult than it may seem to be. 
Denoting by $p_\ell / q_\ell$ (we drop the letters $a, b$) the $\ell$-th convergent of $\xi_{a,b}$ for $\ell \ge 1$, 
it is sufficient to check that $P_{a, b} (p_\ell / q_\ell)$ tends to $0$ as $\ell$ tends to infinity along 
a subsequence of the integers. 
We focus on the indices $\ell$ which are powers of $4$. 
We heavily use the properties of symmetry of the Thue--Morse sequence and we proceed by induction 
to show that $|P_{a, b} (p_{4^k} / q_{4^k})|$ is, for $k \ge 1$, 
less than $|q_{4^k}|^{-2}$ times some constant independent of~$k$.

\section{Higher degree exponents of approximation}   \label{higher}

Beside the rational approximation to a power series $\xi$ in $\F_q((z^{-1}))$,  
we often consider the simultaneous rational approximation of $\xi, \xi^2, \ldots , \xi^n$ 
by rational fractions with the same denominator, as well as small values of the 
linear form $b_0 + b_1 \xi + \ldots + b_n \xi^n$ with coefficients in $\F_q[z]$. This leads us to introduce
the exponents of approximation $w_n$ and $\lambda_n$, defined below. 
For a survey of recent results on these exponents evaluated at real numbers, the reader is directed to \cite{BuDurham}.

The height $H(P)$ of a polynomial $P(X) = b_n (z) X^n + \ldots + b_1(z) X + b_0 (z)$ 
over $\F_q[z]$ is the maximum of the absolute values of its 
coefficients, that is, of $|b_0|, |b_1|, \ldots , |b_n|$. 
Recall that the `fractional part' $\Vert \cdot \Vert$ is defined by
$$
\Bigl\Vert \sum_{h = -m}^{+ \infty} \, a_h z^{-h} \Bigr\Vert = \Bigl| \sum_{h=1}^{+ \infty} \, a_h z^{-h} \Bigr|,
$$
for every power series $\xi = \sum_{h = -m}^{+ \infty} \, a_h z^{-h}$ in $\F_q ((z^{-1}))$.

\begin{definition}
\label{Def:1.1}
Let $\xi$ be in $\F_q ((z^{-1}))$. Let $n \ge 1$ be an integer.
We denote by
$w_n (\xi)$ the supremum of the real numbers $w$ for which
$$
0 < |P(\xi )| < H(P)^{-w}
$$
has infinitely many solutions in polynomials $P(X)$ over $\F_q[z]$ of
degree at most $n$. 
We denote by
$\lambda_n (\xi)$ the supremum of the real numbers $\lambda$ for which
$$
0 < \max\{ \Vert Q(z) \xi \Vert, \ldots , \Vert Q(z) \xi^n \Vert \} < q^{-\lambda  \deg(Q)}
$$
has infinitely many solutions in polynomials $Q(z)$ in $\F_q[z]$. 
For positive real numbers $w$, $\lambda$, set
$$
B_n (\xi, w) = \liminf_{H(P) \to + \infty} \, H(P)^{w} \cdot |P (\xi) |
$$
and
$$
B'_n (\xi, \lambda) = \liminf_{|Q| \to + \infty} 
\, |Q|^{\lambda} \cdot \max\{ \Vert Q(z) \xi \Vert, \ldots , \Vert Q(z) \xi^n \Vert \}. 
$$
\end{definition}

Let $\xi$ be an algebraic power series in $\F_q ((z^{-1}))$ of degree $d \ge 2$. 
Let $n \ge 1$ be an integer. We briefly show how a Liouville-type argument 
allows us to bound $w_n (\xi)$ from above. 
Denote by $\xi_1 = \xi, \xi_2, \ldots , \xi_d$ its Galois conjugates. 
Let $P(X)$ be a non-zero polynomial in $\F_q [z] (X)$. 
Then, the product $P(\xi_1) \ldots P(\xi_d)$ is a nonzero element 
of $\F_q(z)$, whose absolute value is bounded from below by $c_1(\xi)$, which (as $c_2(\xi)$ 
and $c_3(\xi)$ below) is positive and depends only on $\xi$. 
Since $|P(\xi_j)|$, $j = 2, \ldots , d$, are bounded from above by $c_2 (\xi)$ times $H(P)$, 
we get that 
$$
|P(\xi)| > c_3 (\xi) H(P)^{-d+1}
$$
and we derive that 
\begin{equation} \label{liouv}
w_n (\xi) \le d-1, \quad n \ge 1. 
\end{equation}
In the particular case of $\xi_{a,b}$, this gives 
$w_3 (\xi_{a,b}) \le 3$, by Theorem \ref{mainth}. Actually, this inequality is an equality.

\begin{theorem}   \label{thexp}
The Thue--Morse power series $\xi_{a, b}$ in $\F_2((z^{-1}))$ satisfies
$$
\lambda_2 (\xi_{a, b}) = 1, \quad w_2 (\xi_{a, b}) = 3, \quad w_n (\xi_{a, b}) = 3, 
\quad \lambda_n (\xi_{a, b}) = \frac{1}{3},  \quad n \ge 3.
$$
Moreover, there are positive constants $c_4, c_5$, depending only on $a$ and $b$, such that
$$
|P(\xi_{a,b})| > c_4 H(P)^{-3}, \quad
\hbox{for every nonzero $P(X)$ in $\F_2[z] (X)$ of degree $\le 3$}, 
$$
and there are polynomials $P(X)$ in $\F_2[z] (X)$ of degree $2$ of arbitrarily large height such that 
$$
|P(\xi_{a,b})| < c_5 H(P)^{-3}. 
$$
\end{theorem}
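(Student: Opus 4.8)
The plan is to prove Theorem~\ref{thexp} by combining the algebraicity of degree~$4$ (Theorem~\ref{mainth}), the Liouville-type bound \eqref{liouv}, and the palindromic structure of the Thue--Morse word to produce matching lower and upper bounds. First I would treat the quadratic exponents. The inequality $w_2(\xi_{a,b}) \le 3$ is immediate from \eqref{liouv} with $d = 4$; the corresponding lower bound $w_2(\xi_{a,b}) \ge 3$ will come from the observation, already invoked in Section~\ref{Res}, that $\xi_{a,b}$ and $\xi_{a,b}^2$ are very well simultaneously approximable with a common denominator. Concretely, since $\mathbf t$ begins with arbitrarily long palindromes $t_0 t_1 \cdots t_{\ell-1}$, the convergent denominator $q_\ell$ (which is a palindrome, or a near-palindrome, as a polynomial read by coefficients) satisfies that $q_\ell^2$, $q_\ell p_\ell$, $p_\ell^2$ are all well approximated by polynomials of degree $\le 2\deg q_\ell$; equivalently, one builds a nonzero quadratic polynomial $P(X) = b_2 X^2 + b_1 X + b_0$ of height $\asymp |q_\ell|$ with $|P(\xi_{a,b})| \asymp |q_\ell|^{-3}$. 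This simultaneously yields $w_2(\xi_{a,b}) = 3$, the upper bound $\lambda_2(\xi_{a,b}) \le 1$ (since large values of $w_2$ force $\xi,\xi^2$ to be simultaneously approximable only to the generic exponent $1/2$ unless there is a linear-form coincidence, and here the Khintchine-type transference inequality in the function-field setting gives $\lambda_2 \le \frac{w_2-1}{w_2+1}\cdot\frac{2}{2-1}$ — I would instead argue directly), and the last displayed upper bound $|P(\xi_{a,b})| < c_5 H(P)^{-3}$ for degree-$2$ polynomials of arbitrarily large height.

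Next I would pass to $n \ge 3$. The bound $w_n(\xi_{a,b}) \le 3$ for all $n$ follows at once from \eqref{liouv} with $d = 4$, and it is sharp already at $n = 2$, hence $w_n(\xi_{a,b}) = 3$ for every $n \ge 2$. For the lower bound on $\lambda_n$ when $n \ge 3$, I would again use the palindromes: because $\mathbf t$ has infinitely many palindromic prefixes, the denominators $q_\ell$ along that subsequence make $\Vert q_\ell \xi_{a,b}\Vert$, $\Vert q_\ell \xi_{a,b}^2\Vert$, $\Vert q_\ell \xi_{a,b}^3\Vert$ all small — one inherits good simultaneous approximation of $\xi,\xi^2,\xi^3$ from that of $\xi$ alone using the palindromic symmetry that relates the "top" and "bottom" coefficients of $q_\ell \xi$ and its powers. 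This gives $\lambda_n(\xi_{a,b}) \ge \frac13$. For the reverse inequality $\lambda_n(\xi_{a,b}) \le \frac13$ (equality), I would use the transference principle relating the $\lambda_n$ exponent to the $w_n$ exponent (the function-field analogues of the Khintchine transference inequalities, available in the literature cited): from $w_n(\xi_{a,b}) = 3$ one gets $\lambda_n(\xi_{a,b}) \le \frac{1}{w_n} \le \frac13$ (the precise transference reads $\lambda_n \ge \frac{w_n - n + 1}{(n-1)w_n}$ in one direction and $w_n \ge n\lambda_n$, hence $\lambda_n \le w_n/n$; combined with the direct analysis this pins down $\lambda_n = \frac13$ for $n \ge 3$ and is consistent with $\lambda_2 = 1$ since $w_2/2 = 3/2 > 1$, so for $n = 2$ the value $\lambda_2 = 1$ must be obtained directly from the quadratic construction above, not from transference).

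Finally, for the explicit constant statement, I would make the Liouville argument of the excerpt quantitative. With $\xi_1 = \xi_{a,b}, \xi_2, \xi_3, \xi_4$ the Galois conjugates over $\F_2(z)$, for any nonzero $P \in \F_2[z](X)$ of degree $\le 3$ the product $P(\xi_1)P(\xi_2)P(\xi_3)P(\xi_4)$ lies in $\F_2(z)\setminus\{0\}$; clearing denominators it is a nonzero element of $\F_2[z]$ after multiplying by a fixed power of the leading coefficient $A_4$ of the minimal polynomial, hence has norm $\ge 1$ times an explicit power of $|A_4|^{-1}$. Since each $|\xi_j|$ is bounded and $|P(\xi_j)| \le c(\xi) H(P)$ for $j = 2,3,4$, dividing out gives $|P(\xi_{a,b})| > c_4 H(P)^{-3}$ with $c_4$ depending only on $a,b$ through $A_4$ and the (finite) conjugate data. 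The hard part will be the construction underlying the lower bounds $w_2 \ge 3$, $\lambda_n \ge \frac13$ and the sharp upper bound $\lambda_2 \le 1$: one must extract from the palindromic prefixes of $\mathbf t$ the precise combinatorial identity linking $q_\ell$, $p_\ell$ and the reversed polynomials, and verify that the resulting quadratic (resp.\ cubic) linear forms in $\xi_{a,b}$ really do attain size $|q_\ell|^{-3}$ and no smaller — this is where the symmetry arguments sketched in the proof of Theorem~\ref{mainth} for $P_{a,b}(p_{4^k}/q_{4^k})$ must be reused and refined. Everything else is either \eqref{liouv} or standard function-field transference.
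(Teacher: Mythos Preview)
Your overall architecture (Liouville upper bound, palindromic construction, transference) is correct, but the execution has two concrete gaps and differs from the paper's cleaner route.

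First, your transference inequalities are wrong. You write ``$w_n \ge n\lambda_n$, hence $\lambda_n \le w_n/n$''; with $w_3 = 3$ this yields only $\lambda_3 \le 1$, not $\lambda_3 \le \tfrac13$. The correct Khintchine--Aggarwal transference bounds are
\[
\frac{w_n}{(n-1)w_n + n} \;\le\; \lambda_n \;\le\; \frac{w_n - n + 1}{n},
\]
and with $w_3 = 3$ both sides equal $\tfrac13$, which is exactly how the paper pins down $\lambda_3(\xi_{a,b}) = \tfrac13$. (The lower bound $\lambda_n \ge \tfrac1n$ is just Dirichlet; no palindromes are needed there.) Your plan to get $\lambda_n \ge \tfrac13$ by showing directly that $\Vert q_\ell \xi^3\Vert$ is small via the palindromic symmetry is unsubstantiated and in fact unnecessary: the palindrome identity $p_k = q_{k-1}$ controls $\xi$ and $\xi^2$ but says nothing obvious about $\xi^3$.

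Second, the paper's logical flow is the reverse of yours for the quadratic case. Rather than building a small quadratic form and reading off $w_2 \ge 3$ first, the paper uses the palindrome prefix of length $k = 4^\ell$ to obtain the matrix symmetry $p_k = q_{k-1}$, from which an explicit two--line computation gives $\bigl|\xi_{a,b}^2 - p_{k-1}/q_k\bigr| = |q_k|^{-2}$ and hence $\max\{\Vert q_k \xi\Vert, \Vert q_k \xi^2\Vert\} = |q_k|^{-1}$. This yields $\lambda_2 \ge 1$; the upper bound $\lambda_2 \le 1$ is simply $\lambda_2 \le \lambda_1 = 1$ (bounded partial quotients), with no transference needed. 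Then Aggarwal's transference converts the finiteness of $B'_2(\xi,1)$ into finiteness of $B_2(\xi,3)$, giving both $w_2 \ge 3$ and the constant $c_5$ in one stroke. Your direct construction of a quadratic $P$ with $|P(\xi)| \asymp H(P)^{-3}$ is dual to this, but you left the mechanism vague (``a palindrome, or a near-palindrome, as a polynomial read by coefficients''); the precise identity you need is exactly $p_k = q_{k-1}$. Your treatment of the constant $c_4$ via the product of conjugates is fine and matches the paper's Liouville argument.
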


\begin{proof}
Since the continued fraction expansion of $\xi_{a, b}$ begins with 
arbitrarily large palindromes, $\xi_{a, b}$ and its square are simultaneously 
well approximable by rational fractions with the same denominator. 
This argument appeared in \cite{AdBu07b} and we recall it below for the sake of completeness. 
For $k \ge 0$, let $p_k / q_k$ denote the $k$-th convergent to $\xi_{a, b}$. 
Recall that the Thue--Morse word $\bft = t_0 t_1 t_2 \ldots $ over $\{a, b\}$
is the fixed point starting with $a$ 
of the uniform morphism $\tau$ defined by $\tau(a) = ab$ and $\tau(b)=ba$. 
Since the words $\tau^2 (a) = abba$ 
and $\tau^2 (b) = baab$, and the prefix of length $4$ of $\bft = a b b a \ldots$ are palindromes, 
every prefix of $\bft$  of length a power of $4$ 
is a palindrome. Put 
$$
\cM_a = \begin{pmatrix} a & 1 \\ 1 & 0\\ \end{pmatrix}, 
\quad
\cM_b = \begin{pmatrix} b & 1 \\ 1 & 0\\ \end{pmatrix}.
$$
Then, for $k \ge 1$, we have
$$
\cM_{t_0} \cM_{t_1} \ldots \cM_{t_{k-1}} = 
\begin{pmatrix} q_k & q_{k-1} \\ p_k & p_{k-1} \\ \end{pmatrix}.
$$
Take $k = 4^\ell$ for some positive integer $\ell$. 
The matrix $\cM_{t_0} \cM_{t_1} \ldots \cM_{t_{k-1}}$ is symmetrical 
since $t_0 t_1 \ldots t_{k-1}$ is a palindrome. Consequently, $p_k = q_{k-1}$. 
It then follows from $t_{k-1} = a$, $t_k = b$ and the theory of continued fraction that 
$$
\Bigl| \xi_{a, b} - \frac{p_k}{q_k} \Bigr| = \frac{1}{|b| |q_k|^2}, 
\quad
\Bigl| \xi_{a, b} - \frac{p_{k-1}}{q_{k-1}} \Bigr| = \frac{1}{|a| |q_{k-1}|^2} = \frac{1}{|q_{k-1}| \cdot |q_k|},
$$
thus
$$
\Bigl| \xi^2_{a, b} - \frac{p_{k-1}}{q_k} \Bigr| = 
\Bigl| \Bigl( \xi_{a, b} - \frac{p_{k}}{q_{k}} \Bigr)  \Bigl( \xi_{a, b} + \frac{p_{k-1}}{q_{k-1}} \Bigr) + 
\frac{\xi_{a,b}}{q_{k-1} q_k} \Bigr| = \frac{1}{|q_k|^2}, 
$$
since $|\xi_{a,b}| = 1 / |a|$. 
We conclude that 
$$
\max\{ \|q_k \xi_{a,b} \|,  \|q_k \xi_{a,b}^2 \| \} = \frac{1}{|q_k|}. 
$$
This gives $\lambda_2 (\xi_{a, b}) \ge 1$ and there is equality since 
$\lambda_1 (\xi_{a, b}) = 1$. Furthermore, the quantity 
$B'_2 (\xi_{a, b}, 1)$ is finite, since $\xi_{a, b}$ has bounded partial quotients.   

By the power series field analogue 
of a classical transference inequality established by Aggarwal \cite{Agg68}, 
we immediately obtain that $B_2 (\xi_{a, b}, 3)$ is finite, thus
$w_2 (\xi_{a, b}) \ge 3$. 

This lower bound holds as well for $w_2 (\xi_{q, a, b})$. Combined with \eqref{liouv}, this 
shows that $\xi_{q,a,b}$ is transcendental or algebraic of degree at least $4$, thereby 
establishing the assertion above Theorem \ref{mainth}.

Since $\xi_{a, b}$ is algebraic of degree $4$, by Theorem \ref{mainth}, 
the Liouville-type result obtained below Definition \ref{Def:1.1} combined with 
the lower bound $w_2 (\xi_{a, b}) \ge 3$ implies that $w_2 (\xi_{a, b}) = w_3 (\xi_{a, b}) = 3$ and 
$$
w_n (\xi_{a, b}) = 3, \quad \lambda_n (\xi_{a, b}) = \frac{1}{3}, \quad n \ge 3,
$$
the value of $\lambda_3 (\xi_{a, b})$ being a consequence of a transference inequality established in \cite{Agg68}. 
\end{proof}

\section{Proof of Theorem \ref{mainth}}   \label{proofmain}

We keep the notation of Theorem \ref{mainth} and  
check that $\xi_{a,b}$ is a root of the equation
$$
A_4 X^4 + \ldots + A_1 X + A_0 = 0. 
$$
In view of the discussion at the beginning of Section \ref{Res}, 
this implies that $\xi_{a,b}$ is algebraic of degree $4$. 

The computation is easier if we replace $a$ and $b$ by their inverses, that is, 
if we consider the continued fraction
\begin{equation} \label{zeta}
\zeta = [0; a^{-1}, b^{-1}, b^{-1}, a^{-1}, b^{-1}, a^{-1}, a^{-1}, b^{-1}, \ldots]. 
\end{equation}

\begin{definition}
Set
$$
M_0(a,b)= a \begin{pmatrix} a^{-1} & 1 \\ 1 & 0\\ \end{pmatrix} 
= \begin{pmatrix} 1 & a \\ a & 0\\ \end{pmatrix}
$$
and
$$
M_k(a,b)=M_{k-1}(a,b) \cdot  M_{k-1}(b,a)^2 \cdot  M_{k-1}(a,b), \quad k \ge 1,
$$
$$
N_k (a, b) = M_{k-1}(a,b) \cdot  M_{k-1}(b,a), \quad k\ge 1. 
$$
\end{definition}

Observe that 
$$
M_1(a,b)
= \begin{pmatrix} a^2 b^2 + b^2 + 1 & a^2 b + a b^2 + a \\ a^2 b + a b^2 + a & a^2 b^2 + a^2 \\ \end{pmatrix}
$$
and that, for $k \ge 1$, the matrix $M_k(a, b)$ is symmetric and of the form
$$
\begin{pmatrix}
	1 + \cdots + (ab)^{2^{2k-1}} & a + \cdots + (a+b)^{(2^{2k-1}+1)/3} (ab)^{2^{2k-2}}\\
	 a + \cdots + (a+b)^{(2^{2k-1}+1)/3} (ab)^{2^{2k-2}} & a^2 + \cdots + (ab)^{2^{2k-1}}
\end{pmatrix}. 
$$
An immediate induction shows that its entries are polynomials in $a$ and $b$, whose degrees
in $a$ (resp., in $b$) are at most equal to $2^{2k-1}$.  

For a polynomial $M (a, b)$ (or a matrix $M (a, b)$ whose coefficients are polynomials) 
in the variables $a$ and $b$, write $\tiM (a, b)$ the image of $M(a, b)$ under the involution 
which exchanges $a$ and $b$, that is, $\tiM(a,b) = M(b, a)$. 

It follows from the definition of $N_k (a, b)$ that its upper left and lower right entries 
are symmetrical in $a$ and $b$, while its upper right entry is obtained from its lower left one by exchanging 
$a$ and $b$. We introduce some further notation. 

\begin{definition}
Write
$$
M_k(a,b)=\begin{pmatrix} Q_k(a,b) & P_k(a,b) \\ P_k(a,b) & R_k(a,b) \end{pmatrix}, \quad k \ge 0,
$$
and
$$
N_k(a,b)=\begin{pmatrix} U_k(a,b) & V_k(a,b) \\ V_k(b,a) & W_k(a,b) \end{pmatrix}, \quad k \ge 1,
$$
To shorten the notation, we simply write 
$$
M_k = \begin{pmatrix} Q_k & P_k  \\ P_k  & R_k  \end{pmatrix}, \quad 
N_k =\begin{pmatrix} U_k  & V_k \\ \tiV_k  & W_k  \end{pmatrix}, \quad k \ge 1,
$$
and observe  that 
$$
U_k = \tiU_k, \quad W_k = \tiW_k, \quad k \ge 1. 
$$
\end{definition}

Set
$$
\tau = 1 + a + b
$$
and define
$$
n_k = 2^{2k - 1}, \quad k \ge 1.  
$$

First, we establish several relations between the entries of $M_k$ and $\tiM_k$. 

\begin{proposition}   \label{PQR} 
For $k \ge 1$, we have
\begin{align*}
	Q_k + R_k &= \tau^{(2 n_k  + 2) / 3}, \\
	P_k + \tiP_k &= (1 + \tau)   \tau^{(2 n_k - 4) / 3 },\\
	Q_k + \tiR_k &= \tau^{(2 n_k - 4) / 3 }.
\end{align*}
\end{proposition}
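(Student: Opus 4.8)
The plan is to prove the three identities of Proposition \ref{PQR} simultaneously by induction on $k$, using the recursion $M_k = M_{k-1}(a,b)\,M_{k-1}(b,a)^2\,M_{k-1}(a,b)$. First I would dispose of the base case $k=1$: from the explicit matrix $M_1(a,b)$ displayed above, one reads off $Q_1 = a^2b^2+b^2+1$, $P_1 = a^2b+ab^2+a$, $R_1 = a^2b^2+a^2$, and since $n_1 = 2$ the claimed values are $\tau^2$, $(1+\tau)\tau^0 = a+b$, and $\tau^0 = 1$ respectively; each of these is an elementary check in $\F_2[a,b]$ (recalling $\tau = 1+a+b$ and that we are in characteristic $2$, so $\tau^2 = 1+a^2+b^2$).

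For the inductive step I would set $X = M_{k-1}(a,b) = \begin{pmatrix} Q_{k-1} & P_{k-1} \\ P_{k-1} & R_{k-1}\end{pmatrix}$ and $Y = M_{k-1}(b,a) = \widetilde X = \begin{pmatrix} \tiQ_{k-1} & \tiP_{k-1} \\ \tiP_{k-1} & \tiR_{k-1}\end{pmatrix}$, so that $M_k = X Y^2 X = X(YY)X$. Expanding this product gives each of $Q_k$, $P_k$, $R_k$ as an explicit polynomial expression in the six entries $Q_{k-1}, P_{k-1}, R_{k-1}, \tiQ_{k-1}, \tiP_{k-1}, \tiR_{k-1}$. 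The key observation is that the three quantities whose behaviour we want to control — namely $Q_k + R_k$, $P_k + \tiP_k$, and $Q_k + \tiR_k$ (the last requiring also the $\widetilde{\phantom{x}}$-image of $M_k$, for which $\tiX = Y$, $\tiY = X$, so $\tiM_k = Y X^2 Y$) — should, after substituting the inductive hypotheses $Q_{k-1}+R_{k-1} = \tau^{(2n_{k-1}+2)/3}$, $P_{k-1}+\tiP_{k-1} = (1+\tau)\tau^{(2n_{k-1}-4)/3}$, $Q_{k-1}+\tiR_{k-1} = \tau^{(2n_{k-1}-4)/3}$, collapse to a single power of $\tau$ because of massive cancellation in characteristic $2$ (squares of sums split, and $n_k = 4 n_{k-1}$ so the target exponents roughly quadruple). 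I would also need the symmetry facts $U_k = \tiU_k$, $W_k = \tiW_k$ already recorded, together with whatever analogous symmetry is visible in $X Y^2 X$, to reduce the number of independent quantities; in particular $\det X = \det Y$ and $\det M_k = (\det X)^2(\det Y)^2$, and computing $\det M_1$ explicitly may furnish a fourth relation that feeds the cancellation.

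The main obstacle I anticipate is purely bookkeeping: expanding $X Y^2 X$ produces long polynomials, and one must track which monomials in the $P,Q,R,\tiP,\tiQ,\tiR$ survive modulo $2$ and then re-express the survivors via the three inductive identities without circularity. The cleanest route is probably to work not with the individual entries but directly with the \emph{combinations} $Q+R$, $P+\tiP$, $Q+\tiR$, plus auxiliary ones like $Q+\tiQ$, $R+\tiR$, $P+R$, and to derive closed recursions for this whole package of symmetric/antisymmetric combinations — in effect diagonalising the problem before plugging in exponents. Once the recursions for the combinations are established, verifying that the closed forms $\tau^{(2n_k+2)/3}$, $(1+\tau)\tau^{(2n_k-4)/3}$, $\tau^{(2n_k-4)/3}$ satisfy them is just a matter of checking the arithmetic identities $2n_k = 8n_{k-1}$ (so $(2n_k+2)/3 = 4\cdot(2n_{k-1}+2)/3 - 2$, etc.), which are integer-exponent bookkeeping. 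I expect no genuine conceptual difficulty beyond organising this cancellation carefully.
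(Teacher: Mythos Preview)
Your overall plan (simultaneous induction on $k$, explicit base case, use of the recursion for $M_k$) is exactly what the paper does, and it would work. But you are making the inductive step harder than it needs to be, and the ``bookkeeping obstacle'' you anticipate is precisely what the paper's organisation sidesteps.

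The point you are missing is that $M_k = XY^2X$ should be grouped as $(XY)(YX) = N_k\,\tiN_k$, where $N_k = M_{k-1}(a,b)M_{k-1}(b,a)$ is already introduced in the paper with entries $U_k, V_k, \tiV_k, W_k$ (recall $U_k = \tiU_k$ and $W_k = \tiW_k$). In characteristic~$2$ this gives, with no effort,
\[
Q_k = U_k^2 + V_k^2,\qquad R_k = \tiV_k^2 + W_k^2,\qquad P_k = U_k\tiV_k + V_k W_k,
\]
and hence
\[
Q_k + R_k = (U_k+W_k)^2 + (V_k+\tiV_k)^2,\quad Q_k + \tiR_k = (U_k+W_k)^2,\quad P_k+\tiP_k = (U_k+W_k)(V_k+\tiV_k).
\]
So the whole inductive step reduces to computing the two \emph{quadratic} expressions $U_k + W_k$ and $V_k + \tiV_k$ in the level-$(k-1)$ entries; each of these collapses in two or three lines using the induction hypothesis (e.g.\ $U_k+W_k = Q_{k-1}\tiQ_{k-1} + R_{k-1}\tiR_{k-1} = \tau^{m_{k-1}}(Q_{k-1}+\tiR_{k-1}) = \tau^{2m_{k-1}-2}$). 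No quartic expansion, no package of six auxiliary combinations, no determinant relation needed. Your brute-force expansion of $XY^2X$ would eventually reach the same identities, but only after a computation an order of magnitude longer.
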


\begin{proof}
We proceed by induction. Check that
$$
Q_1 + R_1 = a^2 + b^2 + 1 = \tau^2,
$$
$$
P_1 + \tiP_1 = a + b = 1 + \tau = (1 + \tau) \tau^0,
$$
and
$$
Q_1 + \tiR_1 = 2 a^2 b^2 + 2 b^2 + 1 = 1 = \tau^0.
$$
Set
$$
m_k = (2 n_k + 2) / 3 =  (2^{2k} + 2) / 3, \quad k \ge 0,
$$
and observe that 
$$
m_k = 4 m_{k-1} - 2, \quad k \ge 1. 
$$
Since $M_k = N_k {\widetilde N_k}$, we have
$$
Q_k = U_k^2 + V_k^2, \quad R_k = \tiV_k^2 + W_k^2, \quad
P_k = U_k \tiV_k + V_k W_k, \quad k \ge 1. 
$$
Let $k \ge 2$ be an integer and assume that the proposition holds for the index $k-1$. 
Since $\tau$ is invariant by exchanging $a$ and $b$, 
it follows from the induction hypothesis that 
$$
R_{k-1} + Q_{k-1} = \tiR_{k-1}  + \tiQ_{k-1} = \tau^{m_{k-1}}.
$$
Consequently, we get
\begin{align*}
V_k + \tiV_k & = Q_{k-1} \tiP_{k-1} + P_{k-1} \tiR_{k-1} + \tiQ_{k-1} P_{k-1} + \tiP_{k-1} R_{k-1}  \\
& = \tiP_{k-1} (Q_{k-1} + R_{k-1}) + P_{k-1} (\tiQ_{k-1} + \tiR_{k-1}) \\
& = ( P_{k-1} +  \tiP_{k-1} ) (Q_{k-1} + R_{k-1}) \\
& = (1 + \tau)  \tau^{m_{k-1} - 2}  \tau^{m_{k-1}}.
\end{align*}
Likewise, we have
\begin{align*}
U_k + W_k & = Q_{k-1} \tiQ_{k-1}  + R_{k-1} \tiR_{k-1} \\
& = Q_{k-1} (\tau^{m_{k-1}} + \tiR_{k-1}) + \tiR_{k-1} (Q_{k-1} + \tau^{m_{k-1}}) \\ 
& = \tau^{m_{k-1}} (Q_{k-1} + \tiR_{k-1}) = \tau^{2 m_{k-1} - 2}. 
\end{align*}
This gives
$$
P_k + \tiP_k  = (V_k + \tiV_k ) (U_k + W_k) =  (1 + \tau) \tau^{4 m_{k-1} - 4}
= (1 + \tau) \tau^{m_{k} - 2},
$$
as expected. 

We get
\begin{align*}
Q_k + R_k & = U_k^2 + V_k^2 + \tiV_k^2 + W_k^2  \\
& = (U_k + W_k)^2 + (V_k + \tiV_k)^2 \\ 
& = \tau^{4 m_{k-1} - 4} + (1 + \tau^2) \tau^{4 m_{k-1} - 4} = \tau^{4 m_{k-1} - 2} = \tau^{ m_{k}}. 
\end{align*}
Finally, we check that
\begin{align*}
Q_k + \tiR_k & = U_k^2 + W_k^2  \\
& = (U_k + W_k)^2  = \tau^{4 m_{k-1} - 4}  = \tau^{ m_{k} -2},
\end{align*}
and the proof is complete. 
\end{proof}

In the next lemma, we express $P_k, Q_k, R_k$ in terms of a new auxiliary quantity 
denoted by $Z_k$.

\begin{lemma}    \label{ZPQR}
Let $k$ be a positive integer and define
$$
Z_k = U_k + V_k.
$$
Then, we have
$$
Q_k = Z_k^2, \quad R_k = Z_k^2 + \tau^{ (2 n_k + 2) / 3 },
\quad
P_k = Z_k^2 + \tau^{(n_k + 1) / 3} Z_k + (ab)^{n_k},
$$
and
$$
\tiZ_k = Z_k + (1 + \tau) \tau^{(n_k - 2)/ 3}.
$$
\end{lemma}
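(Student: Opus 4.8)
The plan is to prove Lemma \ref{ZPQR} by induction on $k$, treating the base case $k=1$ by direct computation from the explicit form of $M_1(a,b)$ given above, and then propagating all four identities simultaneously through the recursion $M_k = N_k \widetilde N_k$, $N_k = M_{k-1} \widetilde M_{k-1}$. The key observation is that $Z_k = U_k + V_k$ should, via $N_k = M_{k-1}\widetilde M_{k-1}$, be expressible through $Z_{k-1}$ and $\tiZ_{k-1}$: writing out $U_k = Q_{k-1}\tiQ_{k-1} + P_{k-1}\tiV_{k-1}$-type products is not quite right, so I would first expand
$$
U_k = Q_{k-1} \tiQ_{k-1} + P_{k-1} \tiP_{k-1}, \qquad V_k = Q_{k-1} \tiP_{k-1} + P_{k-1} \tiR_{k-1},
$$
and hence
$$
Z_k = U_k + V_k = Q_{k-1}(\tiQ_{k-1} + \tiP_{k-1}) + P_{k-1}(\tiP_{k-1} + \tiR_{k-1}).
$$
Now substituting the inductive formulas $Q_{k-1} = Z_{k-1}^2$, $R_{k-1} = Z_{k-1}^2 + \tau^{m_{k-1}}$, $P_{k-1} = Z_{k-1}^2 + \tau^{(n_{k-1}+1)/3} Z_{k-1} + (ab)^{n_{k-1}}$, together with the tilde-versions obtained by applying $a \leftrightarrow b$ (which fixes $\tau$ and $ab$, and sends $Z_{k-1} \mapsto \tiZ_{k-1} = Z_{k-1} + (1+\tau)\tau^{(n_{k-1}-2)/3}$), should collapse $Z_k$ to a clean polynomial expression in $Z_{k-1}$, $\tiZ_{k-1}$, $\tau$ and $(ab)$.

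Once I have a formula for $Z_k$ in terms of $Z_{k-1}$, the remaining three identities follow by pure computation in characteristic $2$: $Q_k = U_k^2 + V_k^2 = (U_k + V_k)^2 = Z_k^2$ is immediate from the Frobenius (this already explains why $Z_k$ was defined as $U_k + V_k$ and not something else). For $R_k$, combine $R_k = Q_k + (Q_k + R_k)$ with Proposition \ref{PQR}, which gives $Q_k + R_k = \tau^{(2n_k+2)/3}$ directly. For $P_k$, I would compute $P_k = U_k \tiV_k + V_k W_k$ and rewrite it as $(U_k + V_k)(\tiV_k) + V_k(W_k + \tiV_k) = Z_k \tiV_k + V_k(W_k + \tiV_k)$, or alternatively express $P_k$ in terms of $Z_k$, $\tiZ_k$ using $V_k = Z_k + U_k$ and known symmetric combinations $U_k + W_k = \tau^{2m_{k-1}-2}$ (proved inside Proposition \ref{PQR}); then check the claimed quadratic-in-$Z_k$ shape, using that the ``constant term'' $(ab)^{n_k}$ satisfies $(ab)^{n_k} = (ab)^{4 n_{k-1}} = ((ab)^{n_{k-1}})^4$ and the exponent arithmetic $m_{k-1} = (2n_{k-1}+2)/3$, $(n_k+1)/3 = (4n_{k-1}+1)/3$, checking these are integers via $n_k \equiv 2 \pmod 3$.

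For the last identity, $\tiZ_k = Z_k + (1+\tau)\tau^{(n_k-2)/3}$: apply the $a \leftrightarrow b$ involution to the formula for $Z_k$ in terms of $Z_{k-1}$, subtract, and use the inductive relation $\tiZ_{k-1} + Z_{k-1} = (1+\tau)\tau^{(n_{k-1}-2)/3}$ to see that $\tiZ_k + Z_k$ telescopes to $(1+\tau)\tau^{(n_k-2)/3}$; here I expect a cross-term cancellation where, say, a term $\tau^{(n_{k-1}+1)/3}(Z_{k-1}+\tiZ_{k-1}) = \tau^{(n_{k-1}+1)/3}(1+\tau)\tau^{(n_{k-1}-2)/3} = (1+\tau)\tau^{(2n_{k-1}-1)/3}$ matches up against part of $P_{k-1} + \tiP_{k-1} = (1+\tau)\tau^{(2n_{k-1}-4)/3}$ from Proposition \ref{PQR}. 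The main obstacle is purely bookkeeping: one must be disciplined about which exponents of $\tau$ appear, verify each is a nonnegative integer, and exploit characteristic-$2$ cancellations (squares of sums, vanishing of doubled terms) consistently; there is no conceptual difficulty beyond organizing the substitution of the four inductive hypotheses into the matrix-product expansions of $U_k$, $V_k$, $W_k$ and simplifying. I would present the $Z_k$-recursion as a displayed intermediate claim, prove it first, and then derive the four stated formulas from it in that order.
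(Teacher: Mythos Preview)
Your approach would ultimately work, but it is considerably more laborious than the paper's, and the induction you set up is unnecessary.

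The paper's proof is entirely direct, with no induction on $k$. Your observations for $Q_k$ and $R_k$ already match it exactly: $Q_k = U_k^2 + V_k^2 = (U_k+V_k)^2 = Z_k^2$ by Frobenius, and $R_k = Q_k + (Q_k+R_k) = Z_k^2 + \tau^{(2n_k+2)/3}$ by Proposition~\ref{PQR}. The difference lies in the remaining two identities.

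For $P_k$, rather than expanding $P_k = U_k\tiV_k + V_kW_k$ through the recursion $N_k = M_{k-1}\tiM_{k-1}$ and invoking an inductive hypothesis, the paper simply computes the determinant of $M_k$: since $M_k$ is a product of $n_k$ copies each of $M_0(a,b)$ and $M_0(b,a)$, one has $\det M_k = (ab)^{2n_k}$, i.e.\ $Q_kR_k + P_k^2 = (ab)^{2n_k}$. Substituting $Q_k = Z_k^2$ and $R_k = Z_k^2 + \tau^{(2n_k+2)/3}$ gives $P_k^2 = Z_k^4 + \tau^{(2n_k+2)/3}Z_k^2 + (ab)^{2n_k}$, and since squaring is injective in characteristic~$2$ this yields $P_k = Z_k^2 + \tau^{(n_k+1)/3}Z_k + (ab)^{n_k}$ immediately.

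For $\tiZ_k$, the paper again avoids induction: Proposition~\ref{PQR} gives $Q_k + \tiQ_k = (Q_k+R_k) + (\tiR_k+\tiQ_k) + (Q_k+\tiR_k) + (R_k+\tiQ_k)$, or more directly $Q_k + \tiQ_k = \tau^{(2n_k+2)/3} + \tau^{(2n_k-4)/3} = (1+\tau^2)\tau^{(2n_k-4)/3}$, whence $(Z_k+\tiZ_k)^2 = Z_k^2 + \tiZ_k^2 = Q_k + \tiQ_k$ and taking square roots gives $\tiZ_k = Z_k + (1+\tau)\tau^{(n_k-2)/3}$.

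In short, the two ingredients you are missing are the determinant identity $\det M_k = (ab)^{2n_k}$ and the systematic use of Frobenius to extract square roots. With these, the lemma follows in a few lines from Proposition~\ref{PQR} alone; the $Z_k$-recursion you propose to derive is in fact the content of the \emph{subsequent} Proposition~\ref{reczk}, proved there using Lemma~\ref{ZPQR} as input rather than the other way around.
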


\begin{proof}
Since $Q_k = U_k^2 + V_k^2$, we get immediately that $Q_k = Z_k^2$. 
The expression of $R_k$ in terms of $Z_k$ follows then from Proposition \ref{PQR}.

The matrix $M_k$ is the product, in a suitable order, of $n_k$ copies of $M_0(a,b)$ 
and $n_k$ copies of $M_0(b,a)$, whose determinants are respectively equal to $a^2$ and $b^2$. 
The computation of the determinant of $M_k$ then gives the relation
$$
Q_k R_k + P_k^2 =  (ab)^{2 n_k}. 
$$
Consequently,
$$
P_k^2 = Q_k (Q_k + \tau^{(2 n_k + 2) / 3}) + (ab)^{2 n_k},
$$
thus
$$
P_k = Z_k^2 + \tau^{(n_k + 1) / 3} Z_k + (ab)^{n_k}. 
$$

It follows from Proposition \ref{PQR} that 
$$
\tiQ_k + Q_k = \tau^{(2 n_k  + 2) / 3} + \tau^{(2 n_k  - 4) / 3} = (1 + \tau^2) \tau^{(2 n_k  - 4) / 3},  
$$
which implies the last equality of the lemma. 
\end{proof}

We are now in position to express $Z_{k+1}$ in terms of $k$ and $Z_k$.

\begin{proposition}  \label{reczk}
For $k \ge 1$, we have 
$$
Z_{k+1} = \tau^{n_k} Z_k + \tau^{(2n_k-1)/3} (ab)^{n_k} + (ab)^{2n_k}. 
$$
\end{proposition}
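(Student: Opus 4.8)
The plan is to prove the recursion for $Z_{k+1}$ by computing the upper-left and upper-right entries of $M_{k+1}$ in terms of the entries of $M_k$ and $\widetilde M_k$, and then extracting $Z_{k+1}=U_{k+1}+V_{k+1}$. First I would unwind the definitions: $M_{k+1} = M_k \widetilde M_k^2 M_k = N_{k+1}\widetilde N_{k+1}$ where $N_{k+1} = M_k \widetilde M_k$, so that $U_{k+1} = Q_k\widetilde Q_k + P_k\widetilde P_k$ and $V_{k+1} = Q_k\widetilde P_k + P_k\widetilde R_k$ (reading off the top row of $N_{k+1}$, using the symmetry of $M_k$). Hence
$$
Z_{k+1} = U_{k+1} + V_{k+1} = Q_k(\widetilde Q_k + \widetilde P_k) + P_k(\widetilde P_k + \widetilde R_k).
$$

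Next I would substitute the formulas of Lemma \ref{ZPQR} — namely $Q_k = Z_k^2$, $R_k = Z_k^2 + \tau^{(2n_k+2)/3}$, $P_k = Z_k^2 + \tau^{(n_k+1)/3}Z_k + (ab)^{n_k}$, together with $\widetilde Z_k = Z_k + (1+\tau)\tau^{(n_k-2)/3}$ — applying the $a\leftrightarrow b$ involution to get $\widetilde Q_k, \widetilde P_k, \widetilde R_k$ from $Z_k$ and $\widetilde Z_k$ (noting $\tau$ and $(ab)^{n_k}$ are fixed by the involution). This turns the expression for $Z_{k+1}$ into a polynomial identity purely in $Z_k$, $\tau$, and $(ab)^{n_k}$, which can then be simplified in characteristic $2$. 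Since squaring is additive in characteristic $2$, the quantities $\widetilde Q_k + \widetilde P_k$ and $\widetilde P_k + \widetilde R_k$ collapse nicely: $\widetilde Q_k + \widetilde P_k = \tau^{(n_k+1)/3}\widetilde Z_k + (ab)^{n_k}$ and $\widetilde P_k + \widetilde R_k = \tau^{(n_k+1)/3}\widetilde Z_k + (ab)^{n_k} + \tau^{(2n_k+2)/3} + \tau^{(2n_k-4)/3}$, the last two terms being $\widetilde Q_k + Q_k$. Then I would multiply out, use $n_{k+1} = 4n_k$ so that exponents such as $2n_k$, $(2n_k-1)/3$, etc. match the target, and check the arithmetic of the exponents $(n_k+1)/3$, $(n_k-2)/3$, $(2n_k\pm\cdot)/3$ (all integers since $n_k \equiv 2 \pmod 3$).

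The main obstacle I expect is bookkeeping the exponents of $\tau$ and making sure every cross-term in characteristic $2$ cancels or combines correctly; in particular the terms involving $Z_k^2$, $\tau^{(n_k+1)/3}Z_k$, and the pure $\tau$-powers must conspire so that only $\tau^{n_k}Z_k$, $\tau^{(2n_k-1)/3}(ab)^{n_k}$, and $(ab)^{2n_k}$ survive. A useful sanity check along the way is to verify the identity directly for $k=1$ using the explicit matrix $M_1(a,b)$ displayed above (so $Z_1 = U_1 + V_1$ can be computed from $N_1 = M_0\widetilde M_0$), which also pins down the base case and guards against sign-free errors in characteristic $2$. Once the polynomial identity is verified, the proposition follows immediately.
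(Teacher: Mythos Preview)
Your approach is essentially the paper's: write $Z_{k+1}=U_{k+1}+V_{k+1}=Q_k\widetilde Q_k+P_k\widetilde P_k+Q_k\widetilde P_k+P_k\widetilde R_k$, then reduce everything to $Z_k$, $\tau$, and $(ab)^{n_k}$ via Lemma~\ref{ZPQR} and the relation $\widetilde Z_k=Z_k+(1+\tau)\tau^{(n_k-2)/3}$, and verify $k=1$ directly. The paper regroups the four terms slightly differently, as $P_k(\widetilde Q_k+\widetilde R_k)+(P_k+Q_k)(\widetilde Q_k+\widetilde P_k)$, so that the identity $\widetilde Q_k+\widetilde R_k=\tau^{(2n_k+2)/3}$ from Proposition~\ref{PQR} can be used at once; your grouping works equally well. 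One small slip to fix when you carry it out: from Lemma~\ref{ZPQR} one gets $\widetilde P_k+\widetilde R_k=\tau^{(n_k+1)/3}\widetilde Z_k+(ab)^{n_k}+\tau^{(2n_k+2)/3}$, without the extra $\tau^{(2n_k-4)/3}$ term you wrote (the quantity $\widetilde Q_k+Q_k$ does not appear here).
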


\begin{proof}
We proceed by induction. 
Observe that 
$$
Z_1 = U_1 + V_1 = ab + b + 1 
$$
and
$$
Z_2 =  a^4 b^4 + a^3 b + a^2 b^2 + a^2 + ab + ab^3 + b^2 + 1 + a^3 b^2 + a^2 b + a^2 b^3 + b + b^3.
$$
Since
$$
\tau^2 Z_1 + \tau (ab)^{2} + (ab)^4 =
(1 + a^2 + b^2) ( ab + b + 1 ) + (1 + a + b) a^2 b^2 + a^4 b^4,
$$
we see that the proposition holds for $k=1$. 
Assuming that it holds for a positive integer $k$, we get
\begin{align*}
Z_{k+1} &=P_k \tiP_k + Q_k \tiQ_k  + Q_k \tiP_k + P_k \tiR_k\\
&=P_k( \tiQ_k  +\tiR_k) + (P_k+Q_k)( \tiQ_k +\tiP_k)\\
&=\Bigl(Z_k^2 + \tau^{(n_k + 1)/3} Z_k + (ab)^{n_k}\Bigr)
 \tau^{(2n_k + 2)/3} \\
 & \quad +
 \Bigl(\tau^{(n_k + 1)/3} Z_k + (ab)^{n_k}\Bigr)
\Bigl( \tau^{(n_k + 1)/3} \tiZ_k  + (ab)^{n_k}\Bigr)\\
&=\tau^{(2n_k + 2)/3} Z_k^2 + \tau^{n_k + 1} Z_k + (ab)^{n_k}\tau^{(2n_k + 2)/3} \\
& \quad +
 \tau^{(n_k + 1)/3} Z_k 
\Bigl( \tau^{(n_k + 1)/3}  Z_k+(1 + \tau) \tau^{(2 n_k - 1)/3} + (ab)^{n_k}\Bigr)\\
& \quad +
 (ab)^{n_k} 
\Bigl( \tau^{(n_k + 1)/3}  Z_k+(1 + \tau) \tau^{(2 n_k - 1)/3  } + (ab)^{n_k}\Bigr)\\
&=\tau^{n_k + 1}  Z_k + (ab)^{n_k}\tau^{(2 n_k + 2)/3} + \tau^{n_k} Z_k  ( 1 + \tau )\\
& \quad +
 (ab)^{n_k}
\Bigl((1 + \tau) \tau^{(2n_k - 1)/3}   + (ab)^{n_k}\Bigr)\\
&= \tau^{n_k} Z_k  + \tau^{(2 n_k - 1)/3 }  (ab)^{n_k} + (ab)^{2n_k}. 
\end{align*}
This completes the proof.
\end{proof}

We deduce from Proposition \ref{reczk} an equation of degree $4$ satisfied by $Z_k$.

\begin{proposition}   \label{polzk}
For $k \ge 1$, we have
\begin{align*}
Z_k^4 +  \tau^{(2n_k -1)/3  }  Z_k^2 +(a+b) & \tau^{n_k -1} Z_k   +(b(a+b)\tau^2 + a^2)\tau^{(4n_k -8)/3} \\
& + (a + b) (ab)^{n_k} \tau^{(2n_k - 4) / 3} + (ab)^{2 n_k} = 0.
\end{align*}
\end{proposition}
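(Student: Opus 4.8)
The plan is to derive the quartic for $Z_k$ directly from the linear recurrence in Proposition \ref{reczk}. That recurrence reads
$$
Z_{k+1} = \tau^{n_k} Z_k + \tau^{(2n_k-1)/3} (ab)^{n_k} + (ab)^{2n_k},
$$
and the target quartic for the index $k$ should, upon substituting the recurrence, reduce to the analogous quartic for the index $k+1$ (with $n_k$ replaced by $n_{k+1} = 4 n_k$). So the proof is by induction: first check the base case $k=1$ by plugging $Z_1 = ab + b + 1$ into the degree-$4$ polynomial and verifying it vanishes, which is a finite computation in $\F_2[a,b]$ using $\tau = 1 + a + b$ and $n_1 = 2$. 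Then, assuming the quartic holds for $k$, one must show it holds for $k+1$.

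For the inductive step I would work in characteristic $2$, where raising to the fourth power is additive, so that from the recurrence
$$
Z_k = \tau^{-n_k}\bigl(Z_{k+1} + \tau^{(2n_k-1)/3}(ab)^{n_k} + (ab)^{2n_k}\bigr)
$$
one gets clean expressions for $Z_k^2$ and $Z_k^4$ as $\tau$-power multiples of $Z_{k+1}^2$, $Z_{k+1}^4$, plus explicit lower-degree terms. Substituting these into the $k$-indexed quartic and multiplying through by $\tau^{4n_k}$ to clear denominators, one obtains a polynomial identity in $Z_{k+1}$ that should be, term by term, the quartic for index $k+1$. The exponent bookkeeping is the delicate part: one needs identities such as $(2n_{k+1}-1)/3 = (2n_k-1)/3 + 2n_k = (8n_k-1)/3$, $n_{k+1} - 1 = 4n_k - 1$, $(4n_{k+1}-8)/3 = (16n_k - 8)/3$, and $(2n_{k+1}-4)/3 = (8n_k-4)/3$, all of which follow from $n_{k+1} = 4n_k$ and the fact that $n_k \equiv 2 \pmod 3$ so the fractions are genuine nonnegative integers.

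The main obstacle will be the cross terms. When $Z_k$ is expanded from the recurrence, the terms $\tau^{(2n_k-1)/3}(ab)^{n_k}$ and $(ab)^{2n_k}$ interact with each other and with $Z_{k+1}$ inside $Z_k^2$ (there is no cross term in $Z_k^4$, since squaring is additive in characteristic $2$, but $Z_k^2$ has two cross terms that do not individually vanish). One must check that, after collecting the contributions of $Z_k^4$, $\tau^{(2n_k-1)/3}Z_k^2$, $(a+b)\tau^{n_k-1}Z_k$, and the constant part of the $k$-indexed quartic, every term not matching the $(k+1)$-indexed quartic cancels — in particular the "constant" (i.e. $Z_{k+1}$-free) part, which is the most involved: it mixes $(ab)^{4n_k}$, $(ab)^{3n_k}\tau^{\ast}$, $(ab)^{2n_k}\tau^{\ast}$, and pure $\tau$-powers, and its vanishing relies on the base-case-type identity $(b(a+b)\tau^2 + a^2)$ being exactly the right combination. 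I expect this to come down to one or two scalar polynomial identities in $\F_2[a,b]$ that must be verified by hand (or symbolically), analogous to the displayed check at $k=1$ in Proposition \ref{reczk}. Once those are in place, the induction closes and the proposition follows for all $k \ge 1$.
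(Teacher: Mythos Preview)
Your approach is correct and essentially the same as the paper's: both argue by induction on $k$ using the recurrence of Proposition~\ref{reczk}, verify the base case $k=1$ by direct computation in $\F_2[a,b]$, and then check that the quartic at level $k$ transforms into the quartic at level $k+1$ via exactly the exponent identities you list (all consequences of $n_{k+1}=4n_k$). The only cosmetic difference is direction: the paper substitutes the forward expression $Z_{k+1}=\tau^{n_k}Z_k+\tau^{(2n_k-1)/3}(ab)^{n_k}+(ab)^{2n_k}$ into the $(k{+}1)$-quartic and factors out $\tau^{4n_k}$ times the $k$-quartic (so it never leaves the polynomial ring), whereas you propose to invert the recurrence and clear the resulting $\tau^{-n_k}$ afterwards.
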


\begin{proof}
For $k \ge 1$, set 
$$
\Delta_k
=Z_k^4 +  \tau^{(2n_k -1)/3  }  Z_k^2 +(a+b)  \tau^{n_k -1} Z_k  +(b(a+b)\tau^2 + a^2)\tau^{(4n_k -8)/3}.
$$
Note that 
\begin{align*}
\Delta_1 & = (1 + b^4 + a^4 b^4) + (1 + a + b) (1 + b^2 + a^2 b^2 + (a + b) (1 + b + ab )) \\
& \quad + b (a + b) (1 + a^2 + b^2) + a^2  \\
& = a^4 b^4 + a^2 b^3 + a^3 b^2 = (a + b) (ab)^{n_1} \tau^{(2n_1 - 4) / 3} + (ab)^{2 n_1}. 
\end{align*}
Thus, Proposition \ref{polzk} holds for $k = 1$. 
Let $k$ be a positive integer. 
Since $n_{k+1} = 4 n_k$, it follows from Proposition \ref{reczk} that 
\begin{align*}
\Delta_{k+1}
&=Z_{k+1}^4 +  \tau^{(8n_k-1)/3  }  Z_{k+1}^2 +(a+b)  \tau^{4n_k-1} Z_{k+1}\\
& \quad +(b(a+b)\tau^2 + a^2)\tau^{(16n_k-8)/3}\\
&=\Bigl(\tau^{n_k} Z_k + \tau^{(2n_k-1)/3} (ab)^{n_k} + (ab)^{2n_k}\Bigr)^4\\
& \quad +  \tau^{(8n_k-1)/3  } \Bigl(\tau^{n_k} Z_k + \tau^{(2n_k-1)/3} (ab)^{n_k} + (ab)^{2n_k}\Bigr)^2\\
& \quad +(a+b)  \tau^{4n_k-1} \Bigl(\tau^{n_k} Z_k + \tau^{(2n_k-1)/3} (ab)^{n_k} + (ab)^{2n_k}\Bigr)\\
& \quad +(b(a+b)\tau^2 + a^2)\tau^{(16n_k-8)/3}\\
&=\tau^{4n_k} Z_k^4 + \tau^{(8n_k-4)/3} (ab)^{4n_k} +  (ab)^{8 n_k} \\
& \quad +  \tau^{(8n_k-1)/3  } \Bigl(\tau^{2n_k} Z_k^2 + \tau^{(4n_k-2)/3} (ab)^{2n_k} + (ab)^{4n_k}\Bigr)\\
& \quad +(a+b)  \tau^{4n_k-1} \Bigl(\tau^{n_k} Z_k + \tau^{(2n_k-1)/3} (ab)^{n_k} + (ab)^{2n_k}\Bigr)\\
& \quad +(b(a+b)\tau^2 + a^2)\tau^{(16n_k-8)/3}\\
&=\tau^{4n_k} Z_k^4 +  \tau^{(8n_k-1)/3  } (\tau^{2n_k} Z_k^2 ) +(a+b)  \tau^{4n_k-1} (\tau^{n_k} Z_k )\\
& \quad +(b(a+b)\tau^2 + a^2)\tau^{(16n_k-8)/3} +  \tau^{(8n_k-4)/3} (ab)^{4n_k}\\
& \quad +  \tau^{(8n_k-1)/3  } ( \tau^{(4n_k-2)/3} (ab)^{2n_k} + (ab)^{4n_k})\\
& \quad +(a+b)  \tau^{4n_k-1} (\tau^{(2n_k-1)/3} (ab)^{n_k} + (ab)^{2n_k}) +  (ab)^{8 n_k} \\
&=\tau^{4n_k} \Bigl( Z_k^4 +  \tau^{(2n_k-1)/3  }  Z_k^2 +(a+b)  \tau^{n_k-1}  Z_k\\
& \quad +(b(a+b)\tau^2 + a^2)\tau^{(4n_k-8)/3}   \Bigr) +  \tau^{(8n_k-4)/3}(1+\tau) (ab)^{4n_k}\\
& \quad +  \tau^{(8n_k-1)/3  }  \tau^{(4n_k-2)/3} (ab)^{2n_k}\\
& \quad +(a+b)  \tau^{4n_k-1} (\tau^{(2n_k-1)/3} (ab)^{n_k} + (ab)^{2n_k}) +  (ab)^{8 n_k} \\
&=\tau^{4n_k} \Delta_k +  \tau^{(8n_k-4)/3}(a+b) (ab)^{4n_k} +  \tau^{4n_k } (ab)^{2n_k}\\
& \quad +(a+b)  \tau^{4n_k-1} (\tau^{(2n_k-1)/3} (ab)^{n_k} ) +  (ab)^{8 n_k}.
\end{align*}
Assuming that
$$
\Delta_k = (a + b) (ab)^{n_k} \tau^{(2n_k - 4) / 3} + (ab)^{2 n_k}, 
$$
we deduce that 
\begin{align*}
\Delta_{k+1} &=\tau^{4n_k} (a + b) (ab)^{n_k} \tau^{(2n_k - 4) / 3} 
+ \tau^{4 n_k}   (ab)^{2 n_k} \\
& \quad +  \tau^{(8n_k-4)/3}(a+b) (ab)^{4n_k} +  \tau^{4n_k } (ab)^{2n_k}\\
& \quad +(a+b)  \tau^{4n_k-1} (\tau^{(2n_k-1)/3} (ab)^{n_k} ) +  (ab)^{8 n_k} \\
& =  \tau^{(8n_k-4)/3}(a+b) (ab)^{4n_k}  +  (ab)^{8 n_k}. 
\end{align*}
Since $n_{k+1} = 4 n_k$, this shows that Proposition \ref{polzk} holds for every positive integer~$k$. 
\end{proof}

Set
\begin{align*}
a_2&=(1+a+b)^2, \\
a_0&=a(a+b)a_2 + a^2, \\
a_1=a_3&=(a+b)a_2, \\
a_4&=b(a+b)a_2+ a^2. 
\end{align*}

To prove that 
$$
a_4 \zeta^4 + a_3 \zeta^3 + a_2 \zeta^2 + a_1 \zeta + a_0  = 0,
$$
with $\zeta$ as in \eqref{zeta}, 
it is sufficient to show that the absolute value of 
$$
\delta_k := a_4 P_k^4 + a_3 P_k^3 Q_k + a_2 P_k^2 Q_k^2 + a_1 P_k Q_k^3 + a_0 Q_k^4 
$$
tends to $0$ as $k$ tends to infinity.  
The next proposition is more precise.

\begin{proposition}
For $k \ge 1$, there exists a polynomial $T_k (X, Y)$ with coefficients in $\F_2$ 
such that $\delta_k = (ab)^{2 n_k} T_k(a, b)$. 
\end{proposition}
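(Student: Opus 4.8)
The plan is to peel off the factor $(ab)^{2 n_k}$ from $\delta_k$ by combining the determinant identity $Q_k R_k + P_k^2 = (ab)^{2 n_k}$ (established in the proof of Lemma~\ref{ZPQR}) with Propositions~\ref{PQR} and \ref{polzk} and with Lemma~\ref{ZPQR}. Put $D = (ab)^{2 n_k}$ and, as a local shorthand, $e = (n_k+1)/3$, so that Proposition~\ref{PQR} reads $R_k = Q_k + \tau^{2e}$ and Lemma~\ref{ZPQR} gives $P_k = Z_k^2 + \tau^{e} Z_k + (ab)^{n_k}$ and $Q_k = Z_k^2$. From $P_k^2 = Q_k R_k + D$ one obtains $P_k^4 = Q_k^2 R_k^2 + D^2$, $P_k^3 Q_k = P_k Q_k^2 R_k + D P_k Q_k$ and $P_k^2 Q_k^2 = Q_k^3 R_k + D Q_k^2$; inserting these into the definition of $\delta_k$ and separating the terms carrying a factor $Q_k^2$ from those carrying a factor $D$ gives
\begin{equation*}
\delta_k = Q_k^2 \, S_k + D\bigl( a_4 D + a_3 P_k Q_k + a_2 Q_k^2 \bigr), \qquad
S_k := a_4 R_k^2 + a_3 P_k R_k + a_2 Q_k R_k + a_1 P_k Q_k + a_0 Q_k^2 .
\end{equation*}
Since the second summand is visibly a multiple of $D = (ab)^{2 n_k}$, everything reduces to proving the clean identity $S_k = \tau^4 (ab)^{2 n_k}$.

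To this end I would first eliminate $R_k$ through $R_k = Q_k + \tau^{2e}$ and exploit three elementary consequences of the definitions of the $a_i$, all but the first using $(1+a+b)^2 = (a+b)^2 + 1$, i.e.\ $(a+b)^2 + 1 = \tau^2$: namely $a_1 = a_3$, $\;a_0 + a_2 + a_4 = \tau^4$, and $a_2 + a_3 = \tau^3$. The relation $a_1 = a_3$ cancels the $P_k Q_k$-term, and after expanding one is left with
\begin{equation*}
S_k = \tau^4 Q_k^2 + a_4\tau^{4e} + \tau^{2e}\bigl(a_3 P_k + a_2 Q_k\bigr).
\end{equation*}
Substituting next $Q_k = Z_k^2$ and $P_k = Z_k^2 + \tau^{e} Z_k + (ab)^{n_k}$, and using $a_2 + a_3 = \tau^3$ together with $a_3 = (a+b)\tau^2$, one rewrites $S_k$ as $\tau^4 Z_k^4$ plus an explicit polynomial in $Z_k$ of degree at most $2$ whose coefficients are monomials in $\tau$, $a$, $b$ and $(ab)^{n_k}$.

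The last step is to kill $Z_k^4$ with Proposition~\ref{polzk}, after expressing all of its $\tau$-exponents through $n_k = 3e-1$ (so that $(2n_k-1)/3 = 2e-1$, $n_k-1 = 3e-2$, $(4n_k-8)/3 = 4e-4$ and $(2n_k-4)/3 = 2e-2$), which yields $\tau^4 Z_k^4 = \tau^{2e+3}Z_k^2 + (a+b)\tau^{3e+2}Z_k + (b(a+b)\tau^2+a^2)\tau^{4e} + (a+b)(ab)^{n_k}\tau^{2e+2} + \tau^4(ab)^{2n_k}$. Adding this in, the terms in $Z_k^2$, in $Z_k$, in $\tau^{4e}$ (here one uses $a_4 = b(a+b)\tau^2 + a^2$) and in $(ab)^{n_k}\tau^{2e+2}$ all cancel in pairs, leaving exactly $S_k = \tau^4(ab)^{2n_k}$. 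Hence
\begin{equation*}
\delta_k = (ab)^{2n_k}\bigl( (\tau^4+a_2)Q_k^2 + a_3 P_k Q_k + a_4 (ab)^{2n_k}\bigr),
\end{equation*}
so the statement holds with $T_k(X,Y)$ the polynomial obtained from $(\tau^4+a_2)Q_k^2 + a_3 P_k Q_k + a_4(ab)^{2n_k}$ on regarding $a$ and $b$ as the variables $X,Y$; its coefficients lie in $\F_2$ because $P_k$ and $Q_k$, being entries of $M_k$, are polynomials in $a,b$ over $\F_2$.

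The only genuine difficulty is the bookkeeping in the last two steps: one must keep track of half a dozen $\tau$-exponents, all tied to $n_k$, and verify that the coefficients $a_0,\dots,a_4$ guessed by Hu and Han, the symmetry relations of Proposition~\ref{PQR}, and the quartic of Proposition~\ref{polzk} are calibrated precisely enough that no $Z_k$-dependence survives. An alternative, entirely parallel route — mirroring the proof of Proposition~\ref{polzk} — would proceed by induction on $k$: verify the case $k=1$ directly, and pass from $k$ to $k+1$ using $Z_{k+1} = \tau^{n_k}Z_k + \tau^{(2n_k-1)/3}(ab)^{n_k} + (ab)^{2n_k}$ from Proposition~\ref{reczk} together with $n_{k+1}=4n_k$; but the direct reduction above is shorter and moreover exhibits $T_k$ explicitly.
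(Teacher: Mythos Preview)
Your proof is correct, and it is a genuinely different (and neater) route than the paper's. The paper expands $\delta_k$ directly in terms of $Z_k$ via Lemma~\ref{ZPQR}, producing a degree-$8$ polynomial in $Z_k$ which it then simplifies term by term until the expression can be recognized as $\tau^4\Delta_k Z_k^4$ plus lower-order terms; only at the end does Proposition~\ref{polzk} enter, to replace $\tau^4\Delta_k$ by $\tau^4(ab)^{2n_k}+(a+b)\tau^{(2n_k+8)/3}(ab)^{n_k}$ and force the factor $(ab)^{2n_k}$ to appear. Your idea of using the determinant identity $P_k^2=Q_kR_k+(ab)^{2n_k}$ at the outset to split $\delta_k=Q_k^2S_k+(ab)^{2n_k}(\cdots)$ is more conceptual: it immediately isolates the piece that is visibly divisible by $(ab)^{2n_k}$ and reduces the whole question to the clean identity $S_k=\tau^4(ab)^{2n_k}$, which --- after the substitutions $R_k=Q_k+\tau^{2e}$, $Q_k=Z_k^2$, $P_k=Z_k^2+\tau^eZ_k+(ab)^{n_k}$ --- is exactly $\tau^4$ times the quartic relation of Proposition~\ref{polzk}. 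Your three algebraic identities $a_1=a_3$, $a_0+a_2+a_4=\tau^4$, $a_2+a_3=\tau^3$ are the precise hinges that make this work, and they explain transparently \emph{why} the guessed coefficients $a_0,\dots,a_4$ are calibrated to Proposition~\ref{polzk}; this structural insight is somewhat obscured in the paper's brute-force expansion. As a bonus, expanding your $T_k=(\tau^4+a_2)Q_k^2+a_3P_kQ_k+a_4(ab)^{2n_k}$ in terms of $Z_k$ recovers exactly the paper's final expression for $\delta_k/(ab)^{2n_k}$, confirming that the two arguments land on the same polynomial.
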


\begin{proof}
We express $\delta_k$ in terms of $Z_k$. The lengthy computation 
based on Lemma \ref{ZPQR} gives.   
\begin{align*}
\delta_k&=a_4 P_k^4 + a_3 P_k^3Q_k + a_2 P_k^2 Q_k^2 + a_1 P_k Q_k^3 + a_0 Q_k^4\\
&= a_4 (P_k^4+ Q_k^4) +(a+b)^2\tau^2 Q_k^4 + (a+b)\tau^2 P_kQ_k( P_k^2+Q_k^2 ) + \tau^2 P_k^2 Q_k^2\\
&= a_4 (\tau^{(n_k+1)/3} Z_k+(ab)^{n_k})^4 +(a+b)^2\tau^2 Q_k^4\\
& \quad + (a+b)\tau^2 P_kQ_k( \tau^{(n_k+1)/3} Z_k +(ab)^{n_k} )^2 + \tau^2 P_k^2 Q_k^2\\
&= a_4 (\tau^{(n_k+1)/3} Z_k+(ab)^{n_k})^4 +(a+b)^2\tau^2 Z_k^8\\
& \quad + (a+b)\tau^2 P_kZ_k^2( \tau^{(n_k+1)/3} Z_k +(ab)^{n_k} )^2 + \tau^2 P_k^2 Z_k^4\\
&= a_4 (\tau^{(n_k+1)/3} Z_k+(ab)^{n_k})^4 +(a+b)^2\tau^2 Z_k^8\\
& \quad + (a+b)\tau^2 (Z_k^2+\tau^{(n_k+1)/3} Z_k + (ab)^{n_k})Z_k^2( \tau^{(n_k+1)/3} Z_k +(ab)^{n_k} )^2\\
& \quad + \tau^2 (Z_k^2+\tau^{(n_k+1)/3} Z_k + (ab)^{n_k})^2 Z_k^4\\
&=a_4 \tau^{(4 n_k+4)/3} Z_k^4+ a_4(ab)^{4{n_k}} +(a+b)^2\tau^2 Z_k^8\\
& \quad + (a+b)\tau^2 Z_k^2 (Z_k^2+\tau^{(n_k+1)/3} Z_k + (ab)^{n_k})( \tau^{(2 n_k+2)/3}  Z_k^2 +(ab)^{2{n_k}} )\\
& \quad + \tau^2 (Z_k^4+\tau^{(2 n_k+2)/3}  Z_k^2 + (ab)^{2{n_k}}) Z_k^4\\
&=a_4 \tau^{(4 n_k+4)/3} Z_k^4+ a_4(ab)^{4{n_k}} +(a+b)^2\tau^2 Z_k^8\\
& \quad + (a+b)\tau^2 Z_k^2 \bigl( \tau^{(2 n_k+2)/3} Z_k^4 +\tau^{3 n_k+1}  Z_k^3 + \tau^{(2 n_k+2)/3} Z_k^2(ab)^{n_k} \bigr) \\
& \quad +  (a+b)\tau^2 Z_k^2 \bigl( (ab)^{2{n_k}} Z_k^2 +(ab)^{2{n_k}}\tau^{(n_k+1)/3} Z_k + (ab)^{3{n_k}} \bigr)\\
& \quad + \tau^2 (Z_k^8+\tau^{(2 n_k+2)/3}  Z_k^6 + (ab)^{2{n_k}} Z_k^4 )\\
&=a_4 \tau^{(4 n_k+4)/3} Z_k^4+ a_4(ab)^{4{n_k}} +(a+b)^2\tau^2 Z_k^8\\
& \quad +  (a+b)\tau^2 \tau^{(2 n_k+2)/3} Z_k^6 +(a+b)\tau^2  \tau^{3 n_k+1}  Z_k^5 +(a+b)\tau^2   \tau^{(2 n_k+2)/3} Z_k^4(ab)^{n_k}\\
& \quad +   (a+b)\tau^2  (ab)^{2{n_k}} Z_k^4 +(a+b)\tau^2  (ab)^{2{n_k}}\tau^{(n_k+1)/3} Z_k^3 +(a+b)\tau^2 Z_k^2  (ab)^{3{n_k}}\\
& \quad + \tau^2 Z_k^8+\tau^2 \tau^{(2 n_k+2)/3}  Z_k^6 +\tau^2  (ab)^{2{n_k}} Z_k^4 
\\
&= \bigl(a_4 \tau^{(4 n_k+4)/3} + \tau^2  (ab)^{2{n_k}}+ (a+b)\tau^2   \tau^{(2 n_k+2)/3} (ab)^{n_k} +   (a+b)\tau^2  (ab)^{2{n_k}} \bigr) Z_k^4 \\
& \quad + a_4(ab)^{4{n_k}}
+[(a+b)^2\tau^2 +\tau^2 ] Z_k^8\\
& \quad + [ (a+b)\tau^2 \tau^{(2 n_k+2)/3}  +\tau^2 \tau^{(2 n_k+2)/3} ] Z_k^6 +(a+b)\tau^2  \tau^{3 n_k+1}  Z_k^5\\
& \quad +(a+b)\tau^2  (ab)^{2{n_k}}\tau^{(n_k+1)/3} Z_k^3 +(a+b)\tau^2 Z_k^2  (ab)^{3{n_k}} \\
&= \bigl( (b(a+b)\tau^2 + a^2)\tau^{(4 n_k+4)/3} +  (a+b)   \tau^{(2 n_k+8)/3} (ab)^{n_k} +   \tau^3  (ab)^{2{n_k}} \bigr) Z_k^4 \\
& \quad + (b(a+b)\tau^2 + a^2)(ab)^{4{n_k}} 
+ \tau^4  Z_k^8 +  \tau^{(2 n_k+8)/3}   Z_k^6 +(a+b)  \tau^{3 n_k+3}  Z_k^5\\
& \quad +(a+b)   (ab)^{2{n_k}}\tau^{(n_k+7)/3} Z_k^3 +(a+b)\tau^2 Z_k^2  (ab)^{3{n_k}}\\
&= \tau^4  Z_k^8 +  \tau^{(2{n_k}+ 11)/3 }  Z_k^6 +(a+b)  \tau^{{n_k}+3} Z_k^5\\
& \quad +[(b(a+b)\tau^2 + a^2)\tau^{(4{n_k}-5)/3} +  (a+b)   \tau^{(2{n_k}-1)/3}(ab)^{n_k} +    (ab)^{2{n_k}} ]\tau^3 Z_k^4\\
& \quad +(a+b)  (ab)^{2{n_k}}\tau^{({n_k}+7)/3} Z_k^3 +(a+b)\tau^2  (ab)^{3{n_k}}Z_k^2\\
& \quad + (b(a+b)\tau^2 + a^2)(ab)^{4{n_k}}.
\end{align*}
This can be rewritten as
\begin{align*}
\delta_k & = 
\tau^4 \Delta_k Z_k^4 + [ (a+b)   \tau^{(2n_k-1)/3}(ab)^{n_k} +    (ab)^{2n_k} ]\tau^3 Z_k^4\\
& \quad +(a+b)  (ab)^{2n_k}\tau^{(n_k+7)/3} Z_k^3 +(a+b)\tau^2  (ab)^{3n_k}Z_k^2\\
& \quad + (b(a+b)\tau^2 + a^2)(ab)^{4n_k},
\end{align*}
where 
$$
\Delta_k
=Z_k^4 +  \tau^{(2n_k -1)/3  }  Z_k^2 +(a+b)  \tau^{n_k -1} Z_k  +(b(a+b)\tau^2 + a^2)\tau^{(4n_k -8)/3}
$$
has been already introduced in the proof of Proposition \ref{polzk}. 

It follows from Proposition \ref{polzk} that
$$
\tau^4 \Delta_k + (a+b)   \tau^{(2n_k-1)/3}(ab)^{n_k} \tau^3 = \tau^4 (ab)^{2 n_k}.
$$
Consequently, we get
\begin{align*}
\delta_k & = 
\tau^4 (ab)^{2 n_k} Z_k^4 +   (ab)^{2n_k} \tau^3 Z_k^4\\
& \quad +(a+b)  (ab)^{2n_k}\tau^{(n_k+1)/3+2} Z_k^3 +(a+b)\tau^2  (ab)^{3n_k}Z_k^2\\
& \quad + (b(a+b)\tau^2 + a^2)(ab)^{4n_k}. 
\end{align*}
This shows that $\delta_k$ can be written as $(ab)^{2 n_k}$ times a polynomial in $a$ and $b$,
which depends on $k$. 
\end{proof}

To conclude the proof of Theorem \ref{mainth}, 
we explain the relationship between $P_k, Q_k$ and the convergents $p_\ell / q_\ell 
= p_\ell(a, b) / q_\ell (a,b)$ of the Thue--Morse continued fraction
$$
\xi_{a,b} = [0; a, b,b,a, b, a, a, b, \ldots].
$$
We have 
$$
q_{4^k} (a, b) = (a b)^{n_k} Q_k (a^{-1}, b^{-1}), \quad p_{4^k} (a, b) = (a b)^{n_k} P_k (a^{-1}, b^{-1}),
\quad k \ge 1, 
$$
and we check that 
$$
A_j = (ab)^4 a_j (a^{-1}, b^{-1}), \quad 0 \le j \le 4.
$$
For $k \ge 1$, put
\begin{align*}
\eps_{4^k} = & A_4 \Bigl( \frac{p_{4^k}}{q_{4^k}} \Bigr)^4 + A_3 \Bigl( \frac{p_{4^k}}{q_{4^k}} \Bigr)^3
+ A_2  \Bigl( \frac{p_{4^k}}{q_{4^k}} \Bigr)^2   + A_1 \frac{p_{4^k}}{q_{4^k}} + A_0 \\
& = 
\frac{(ab)^{-2 n_k+4} T_k (a^{-1}, b^{-1})}{(ab)^{- 4 n_k} q_{4^k}^4} 
	= \frac{(ab)^{2 n_k+4} T_k (a^{-1}, b^{-1})}{q_{4^k}^4}.
\end{align*}
Set $d = \deg a + \deg b$. Since
$$
|q_{4^k}| = 2^{d n_k}, \quad |T_k (a^{-1}, b^{-1}) | \le 1, \quad 
|ab|^{2 n_k} = 2^{2 d n_k},
$$
we get
$$
|\eps_{4^k} | \le 2^{-2 d n_k} = |q_{4^k}|^{-2}. 
$$
Recalling that $|\xi_{a,b} - p_{4^k} / q_{4^k}| < |q_{4^k}|^{-2}$, we derive that    
$$
| A_4 \xi_{a, b}^4 + A_3 \xi_{a, b}^3 + A_2 \xi_{a, b}^2 + A_1 \xi_{a, b} + A_0 | 
\le \max\{|A_1|, \ldots , |A_4|\} \cdot 2^{-2 d n_k}. 
$$
Since $n_k$ is arbitrarily large, this gives that 
$$
A_4 \xi_{a, b}^4 + A_3 \xi_{a, b}^3 + A_2 \xi_{a, b}^2 + A_1 \xi_{a, b} + A_0 = 0,
$$
and concludes the proof of the theorem.

\section{Proof of Proposition \ref{hypric}}   \label{proofautres}

Since 
$$
A_0 + A_4 = (a + b)^2 (ab + a + b)^2
$$
is a square, its derivative is $0$ and we get that $(A_0)'= (A_4)'$.
By deriving the minimal defining polynomial $A_4 X^4 + \ldots + A_1 X + A_0$ of $\xi_{a,b}$, we obtain
$$
\sum_{j=0}^4 \, (A_j)' X^j + A_1  X'  (1+X^2) =0,
$$
hence
$$
(A_0)' (1+X^4) + (A_1)'  (X+X^3) = A_1 X' (1+ X^2) 
$$
and
$$
(A_0)' (1+X^2) + (A_1)' X = A_1 X',
$$
that is
$$ 
(A_0)' (1+ X^2) = (A_1 X)' .
$$
Since
$$
(A_0)'= (ab)' (a+b+ab)^2, \quad A_1= ab (a+b) (a+b+ab)^2,
$$
the last equation becomes
$$
[(ab (a+b)) X]'= (ab)' (1+ X^2).
$$
This establishes that $\xi_{a, b}$ is differential-quadratic and satisfies a simple Riccati equation.  

Assume now that there are an integer $s \ge 2$ and polynomials 
$A, B, C$ and $D$ in $\F_2[z]$ such that $\xi_{a, b}$ satisfies 
$$
A \xi_{a, b}^{2^s + 1} + B \xi_{a, b}^{2^s} + C \xi_{a, b} + D = 0.
$$
Then, the polynomial $A X^{2^s + 1} + B X^{2^s} + C X + D$ must be a multiple 
of the minimal defining polynomial $A_4 X^4 + \ldots + A_1 X + A_0$ of $\xi_{a,b}$ and there 
exist $c_0, c_1, \ldots , c_{2^s - 3}$ in $\F_2[z]$ such that 
$$
(A_4 X^4 + \ldots + A_1 X + A_0) \, (c_{2^s-3} X^{2^s - 3} + \ldots + c_1 X + c_0) =
A X^{2^s + 1} + B X^{2^s} + C X + D.
$$
We thus get $2^s - 2$ linear forms in $c_0, \ldots , c_{2^s - 3}$ which vanish. 
The associated matrix is a pentadiagonal (if $s \ge 3$, the case $s=2$ being immediate) 
Toeplitz matrix of the form
$$
M_s = 
\begin{pmatrix}
	A_2 &     A_1 &    A_0    & 0   & 0   & 0   & \cdots &   0 &   0 &   0  \\
	A_3 &    A_2 &    A_1   &    A_0 & 0  & 0 & \cdots & 0   & 0   & 0    \\
	A_4 & A_3   & A_2    & A_1   & A_0  & 0 & \cdots & 0  & 0  & 0  \\
	0 &    A_4 & A_3      & A_2  & A_1  & A_0  & \cdots & 0 & 0 &  0 \\
	\vdots & \vdots & \vdots & \vdots & \vdots & \vdots & \ddots & \vdots & \vdots & \vdots \\
	0 & 0 & 0 & 0 & 0 & 0 & \cdots  & A_4 & A_3 & A_2 \\
\end{pmatrix} . 
$$
To compute its determinant, we simply expand it and observe that $A_2$ is the only coefficient 
of the minimal defining polynomial of $\xi_{a,b}$ which involves the term $a^4 b^4$. All the other terms
involved are of the form $a^i b^j$, with $0 \le i, j \le 4$ and $i+j \le 7$. 
Consequently, the determinant of $M_s$ is equal to $(a^4 b^4)^{2^s - 2}$ plus a linear combination 
of terms $a^i b^j$, with $0 \le i, j \le 4 (2^s - 2)$ and $(i, j) \not= (4 (2^s - 2), 4 (2^s - 2))$. 
In particular, it does not vanish and the system of equations has only the trivial solution 
$c_0 = \ldots = c_{2^s - 3} = 0$. This shows that $\xi_{a,b}$ cannot be 
hyperquadratic. 

\section{Concluding remarks}\label{conc}   

We gather in this concluding section several additional results, without their proofs.  

\subsection{Explicit expressions.} By Proposition \ref{reczk}, we can derive an explicit formula for $Z_k$, namely 
$$
Z_{k} = \tau^{{(2^{2k-1}-2)/3} 
}  \Bigl(
1+b
+ \sum_{j=0 }^{2k-2}\tau^{({2-2^{j+1} -\chi(j)})/{3} 
	} (ab)^{2^{j}} 
	\Bigr), \quad k \ge 1, 
$$
where $\chi(j):= j \pmod 2$. Then, by Lemma \ref{ZPQR}, we obtain the following
explicit formula for $P_k/Q_k$.

\begin{proposition}\label{PkQk}
For $k \ge 2$, we have
$$
\frac {P_k}{Q_k} =
\frac{a+a^2b +ab^2 + 
(a+b) \alpha_k 
}{ 1+b^2+ a^2b^2 + \alpha_k + \alpha_k^2 },
$$
where
$$
\alpha_k =   \sum_{j=1 }^{k-1}\tau^{({2-2^{2j+1} })/{3} } (ab)^{2^{2j}} .
$$
\end{proposition}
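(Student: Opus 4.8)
The plan is to read the formula off the closed form for $Z_k$ displayed just above, together with Lemma~\ref{ZPQR}. First I would write $Z_k = \tau^{(n_k-2)/3}\,\beta_k$, where
$$
\beta_k = 1+b+\sum_{j=0}^{2k-2}\tau^{(2-2^{j+1}-\chi(j))/3}(ab)^{2^j}
$$
is the parenthesised factor; note that $\beta_k$ lies a priori only in $\F_2(z)$ — its denominator is a power of $\tau$ — so every manipulation below is carried out in the field $\F_2(z)$. Splitting the sum according to the parity of $j$, absorbing the $j=0$ term into $1+b$ so as to form $Z_1 = 1+b+ab$, retaining the remaining even-index terms as $\alpha_k$, and naming the odd-index terms
$$
\gamma_k := \sum_{i=0}^{k-2}\tau^{(1-2^{2i+2})/3}(ab)^{2^{2i+1}},
$$
one obtains $\beta_k = Z_1+\alpha_k+\gamma_k$.

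Next I would establish the two identities that drive the argument. The first is $\gamma_k^2 = \alpha_k$, obtained by applying the Frobenius map to $\gamma_k$ and reindexing $j = i+1$; since $Z_1^2 = 1+b^2+a^2b^2$, squaring $\beta_k$ then gives $\beta_k^2 = Z_1^2+\alpha_k+\alpha_k^2$, which is exactly the denominator $D_k := 1+b^2+a^2b^2+\alpha_k+\alpha_k^2$ in the statement, whence $Q_k = Z_k^2 = \tau^{(2n_k-4)/3}D_k$. The second is that $\alpha_k^2+\tau\gamma_{k+1}$ does not depend on $k$: it equals $a^2b^2$ when $k=1$ (there $\alpha_1 = 0$ and $\gamma_2 = \tau^{-1}a^2b^2$), and its increment from $k$ to $k+1$ vanishes because $(\alpha_{k+1}-\alpha_k)^2$ and $\tau(\gamma_{k+2}-\gamma_{k+1})$ both equal $\tau^{(4-8n_k)/3}(ab)^{4n_k}$ and the characteristic is $2$.

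With these in hand I would compute $P_k$. By Lemma~\ref{ZPQR}, $P_k = Z_k^2+\tau^{(n_k+1)/3}Z_k+(ab)^{n_k}$, so dividing by $\tau^{(2n_k-4)/3}$ yields $\tau^{-(2n_k-4)/3}P_k = \beta_k^2+\tau\beta_k+\tau^{(4-2n_k)/3}(ab)^{n_k}$. The last term is precisely $\tau(\gamma_{k+1}-\gamma_k)$ — it is the $i=k-1$ summand of $\gamma_{k+1}$ multiplied by $\tau$ — so the right-hand side becomes $\beta_k^2+\tau(Z_1+\alpha_k+\gamma_{k+1})$; substituting $\beta_k^2 = Z_1^2+\alpha_k+\alpha_k^2$ and $\tau\gamma_{k+1} = a^2b^2+\alpha_k^2$ collapses it to $(Z_1^2+\tau Z_1+a^2b^2)+(1+\tau)\alpha_k$. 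Since $1+\tau = a+b$ and $Z_1^2+\tau Z_1+a^2b^2 = a+a^2b+ab^2$ (this is just $P_1$, read off from $M_1$), I would conclude $\tau^{-(2n_k-4)/3}P_k = a+a^2b+ab^2+(a+b)\alpha_k$; dividing by $Q_k = \tau^{(2n_k-4)/3}D_k$ gives the asserted formula. It is only here, after clearing the common power of $\tau$, that the genuine polynomial identities $P_k = \tau^{(2n_k-4)/3}(a+a^2b+ab^2+(a+b)\alpha_k)$ and $Q_k = \tau^{(2n_k-4)/3}D_k$ are recovered.

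I expect the main obstacle to be the second identity, $\alpha_k^2+\tau\gamma_{k+1} = a^2b^2$: it is the one spot where the doubling of exponents built into Proposition~\ref{reczk} and the characteristic-$2$ hypothesis conspire, and without it the expression for $P_k$ would retain a term still depending on $k$. Everything else is bookkeeping — the parity splitting of the sum, tracking powers of $\tau$, and the two short verifications at $k=1$.
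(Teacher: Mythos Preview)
Your proof is correct and follows precisely the route the paper sketches: start from the explicit formula for $Z_k$ (obtained from Proposition~\ref{reczk}) and combine it with Lemma~\ref{ZPQR}. The paper states Proposition~\ref{PkQk} without proof in the concluding section, indicating only those two ingredients; you have supplied the missing computation, including the parity splitting $\beta_k = Z_1+\alpha_k+\gamma_k$, the Frobenius identity $\gamma_k^2=\alpha_k$, and the telescoping identity $\alpha_k^2+\tau\gamma_{k+1}=a^2b^2$, all of which check out.
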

Notice that $\alpha_k$ satisfies the relation 
$$
\alpha_k^4 + \tau^2 \alpha_k + 
\tau^{({8-2^{2k+1} })/{3} } (ab)^{2^{2k}} 
 + a^4b^4 =0.
$$

Proposition \ref{PkQk} implies the following theorem 
by taking the limit as  $k$ goes to infinite.

\begin{theorem}  \label{alpha}
Set
$$
\alpha =   \sum_{j=1 }^{\infty}\tau^{({2-2^{2j+1} })/{3} } (ab)^{2^{2j}}.
$$
Then, $\alpha$ is algebraic and it satisfies 
$$
\alpha^4 + \tau^2 \alpha + 
  a^4b^4 =0.
$$
Furthermore, the Thue--Morse continued fraction $\xi_{a,b}$ can be expressed as  
$$
\xi_{a, b} =
\frac{a+a^2b +ab^2 + 
(a+b) \alpha 
}{ 1+b^2+ a^2b^2 + \alpha + \alpha^2 }.
$$
\end{theorem}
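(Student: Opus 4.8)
The plan is to obtain Theorem~\ref{alpha} from Proposition~\ref{PkQk} by letting $k\to\infty$. As in Sections~\ref{proofmain}--\ref{proofautres}, the natural object is the ``inverted'' continued fraction $\zeta$ of~\eqref{zeta}, whose convergents are the $P_k/Q_k$; replacing $a,b$ by $a^{-1},b^{-1}$ turns every assertion about $\zeta$ into the corresponding one about $\xi_{a,b}$, since $\zeta$ becomes $\xi_{a,b}$ and $p_{4^k}/q_{4^k}=(P_k/Q_k)|_{a\to a^{-1},\,b\to b^{-1}}$. I would carry out all limits in the completion of $\F_2(a,b)$ relative to a valuation $v$ with $v(a)>0$ and $v(b)>0$, so that $v(\tau)=0$ (with $\tau=1+a+b$) while $v(ab)>0$; under the substitution this becomes convergence in $\F_2((z^{-1}))$, where $P_k/Q_k\to\zeta$ reads $p_{4^k}/q_{4^k}\to\xi_{a,b}$.

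First I would check that the series $\alpha=\sum_{j\ge1}\tau^{(2-2^{2j+1})/3}(ab)^{2^{2j}}$ converges and that $\alpha_k\to\alpha$. Since $v(\tau)=0$, the $j$-th summand has valuation $2^{2j}\,v(ab)\to+\infty$, so the tail $\alpha-\alpha_k=\sum_{j\ge k}\tau^{(2-2^{2j+1})/3}(ab)^{2^{2j}}$ tends to $0$; hence $\alpha$ is well defined, with $v(\alpha)=4\,v(ab)>0$.

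Next I would pass to the limit in Proposition~\ref{PkQk}, whose right-hand side is the value at $X=\alpha_k$ of the fixed rational function $\varphi(X)=(a+a^2b+ab^2+(a+b)X)/(1+b^2+a^2b^2+X+X^2)$. Its denominator at $X=\alpha$, namely $1+b^2+a^2b^2+\alpha+\alpha^2$, has valuation $0$ (the summand $1$ being the only one of valuation $0$), hence is invertible, so $\varphi$ is continuous at $\alpha$; since $P_k/Q_k\to\zeta$, letting $k\to\infty$ gives $\zeta=\varphi(\alpha)$, that is, the displayed formula of Theorem~\ref{alpha}. For the algebraicity of $\alpha$ I would likewise pass to the limit in the identity recorded just after Proposition~\ref{PkQk},
$$
\alpha_k^4+\tau^2\alpha_k+\tau^{(8-2^{2k+1})/3}(ab)^{2^{2k}}+a^4b^4=0,\qquad k\ge1 ;
$$
here $\tau^{(8-2^{2k+1})/3}(ab)^{2^{2k}}$ has valuation $2^{2k}\,v(ab)\to+\infty$, so it tends to $0$, and together with $\alpha_k\to\alpha$ and the continuity of $X\mapsto X^4+\tau^2X+a^4b^4$ this yields $\alpha^4+\tau^2\alpha+a^4b^4=0$. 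Thus $\alpha$ is algebraic of degree at most $4$ over $\F_2(z)$, and $\xi_{a,b}=\varphi(\alpha)$ is an identity between algebraic power series. The one genuinely delicate point is the bookkeeping of the first paragraph --- pinning down the completion in which $\zeta$ (hence $\xi_{a,b}$ after the substitution) and the series for $\alpha$ simultaneously make sense, so that all three passages to the limit are legitimate; thereafter everything is a routine continuity argument, and the non-vanishing of the limiting denominator is immediate from the valuation computation.
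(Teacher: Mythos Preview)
Your proposal is correct and follows exactly the approach indicated in the paper, which proves Theorem~\ref{alpha} in a single sentence: ``Proposition~\ref{PkQk} implies the following theorem by taking the limit as $k$ goes to infinite.'' You have supplied precisely the details the paper omits --- the choice of a completion with $v(a),v(b)>0$ so that $\alpha_k\to\alpha$, the non-vanishing of the limiting denominator, and the passage to the limit in the auxiliary identity $\alpha_k^4+\tau^2\alpha_k+\tau^{(8-2^{2k+1})/3}(ab)^{2^{2k}}+a^4b^4=0$ --- and you correctly flag that the only subtle point is fixing the ambient completion so that the convergents $P_k/Q_k$, the series $\alpha$, and the limiting continued fraction all live in the same place.
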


We stress that Theorem \ref{alpha} implies that $\xi_{a, b}$ is algebraic.  

\subsection{Even and odd sections.}
To understand the structure of $\xi_{a,b}$, we let
$$
\xi_{a,b} = \xi^{ee} + \xi^{eo} + \xi^{oe} + \xi^{oo},
$$
where
$$
\xi^{ee}= \sum a^{2i}b^{2j}, \quad
\xi^{eo}= \sum a^{2i}b^{2j+1}, \quad
\xi^{oe}= \sum a^{2i+1}b^{2j}, \quad
\xi^{oo}= \sum a^{2i+1}b^{2j+1}.
$$
We can derive, by Theorem \ref{mainth}, 
that
$$
\xi^{ee} = \xi^{oo} =0,
$$
and
$\xi^{eo}, \xi^{oe}$ are algebraic.
The coefficients of $\xi_{a,b}$ are reproduced in Figure 1 in the following manner.
Let $S=\{(-1,0), (-2, -1), (-2, -3), \ldots\}$ be the set of the blue dots in Figure 1. Then
$$
\xi_{a,b} = \sum_{(i,j)\in S} a^i b^j = a^{-1} + a^{-2}b^{-1} + a^{-2} b^{-3} + \cdots
$$
\begin{figure}
  \centering
  \includegraphics[width=0.9\textwidth]{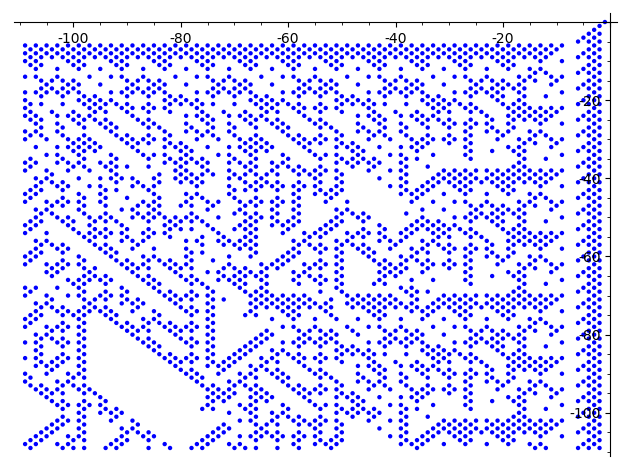}
  \caption{Coefficients of $\xi_{a,b}$}
  \label{figure_F1}
\end{figure}

\subsection{Jacobi continued fraction for formal power series}
In the field of formal power series $\F((x))$ over a field $\F$, 
the Jacobi continued fraction $J(\mathbf{u},\mathbf{v})$ defined by  
two sequences $\mathbf{u}=(u_n)_{n \ge 1}$ and $\mathbf{v}=(v_n)_{n \ge 0}$ with $v_n \neq 0$ for all  
$n \ge 0$ is the infinite continued fraction 
\begin{equation*}
J(\mathbf{u},\mathbf{v})
=
\cfrac {v_0 }{1 + u_1 x - 
  \cfrac{v_1 x^2 }{ 1+u_2x - 
    \cfrac{v_2 x^2 }{ {1 + u_3x - 
\cfrac{v_3 x^2}{ \ddots}}} }}. 
\end{equation*}
The basic properties of Jacobi continued fractions can be found in
\cite{Flajolet1980, Wall1948}.   
The Hankel determinants  $H_n(J(\mathbf{u},\mathbf{v})) $ 
of $J(\mathbf{u},\mathbf{v})$ can be calculated
by means of the following  fundamental relation, first
stated by Heilermann in 1846 \cite{Heilermann1846}:
$$
H_n(J(\mathbf{u},\mathbf{v})) 
		= v_0^n v_1^{n-1} v_2^{n-2} \cdots v_{n-2}^2 v_{n-1}. 
$$
Assume that $\F$ is the field $\F_2$.  If the Jacobi continued fraction exists, then    
$v_j=1$ for $j\geq 0$. In this case, the Hankel determinants $H_n$ are all equal to $1$.  
By \cite{Guo2021HW, Allouche2020HN}, the sequence $(c_n)_{n\geq 0}$ of the 
coefficients of $J(\mathbf{u},\mathbf{v})=\sum_{n\geq 0} c_n x^n $
is {\it apwenian}. This means that the following relations hold:
$$
c_0 = 1  \ \text{and}  \quad c_n \equiv c_{2n+1} + c_{2n+2} \ \bmod 2, \quad n \ge 0.     
$$
Consider the Jacobi continued fraction $\omega (x)$ defined by the Thue--Morse sequence 
\begin{equation*}
\omega(x)
=
\cfrac {1 }{1 + u_1 x + 
  \cfrac{ x^2 }{ 1+u_2x + 
    \cfrac{x^2 }{ {1 + u_3x + 
\cfrac{x^2}{ \ddots}}} }}, 
\end{equation*}
where $(u_1, u_2, \ldots) = (1,0,0,1,0,1,1,0, \ldots)$ is the Thue--Morse
sequence over $\{0, 1\}$. 
Since
$$
\frac{\omega(z^{-1})}{z} = \xi_{z+1,z},
$$
it follows from Theorem \ref{mainth} that 
$$
g_4\, \omega(x)^4 + g_3\, \omega(x)^3 + g_2\, \omega(x)^2 + g_1\, \omega(x) + g_0=0,
$$
where
\begin{align*}
	g_0 &= x^5 + x^3 + x^2 + x + 1, \\
	g_1 &= x^6 + x^5 + x^4 + x^3 + x^2 + x, \\
	g_2 &= x^6 + 1, \\
	g_3 &= x^8 + x^7 + x^6 + x^5 + x^4 + x^3, \\
	g_4 &= x^{10} + x^9 + x^8 + x^7 + x^5 + x^4.
\end{align*}
Consequently, $\omega (x)$ is algebraic of degree $4$. 

\subsection{Thue--Morse continued fractions in the ring of formal power series.}
Let $\K$ be a {\it ring} of characteristic $2$. Consider the Thue--Morse 
continued fractions $\eta$ in the ring of formal power series $\K[[a,b]]$ defined by 
\begin{equation}\label{eta}
	\eta:=\eta_{a,b}	:=\cfrac {a}{ 1+ \cfrac{b}{ 1+ \cfrac{b}{ 1+\cfrac {a}{\ddots}}} }, 
\end{equation}
where the sequence of partial numerators $(a,b,b,a,\ldots)$ is the Thue--Morse sequence over $\{a, b\}$. 
By using the same method as in Section \ref{proofmain} for the proof of Theorem \ref{mainth}, 
we can establish the following result.

\begin{theorem}\label{ring}
	The Thue--Morse continued fraction $\eta$ defined in \eqref{eta} is algebraic of degree $4$. More precisely, we have
$$ a(a+b)^3 + a{b}+  (a+b)(a + b + 1)\eta+(a+b + 1)\eta^2+\eta^4=0.$$
\end{theorem}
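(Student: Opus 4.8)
The plan is to mirror exactly the strategy used in Section~\ref{proofmain} for Theorem~\ref{mainth}, now working in the ring $\K[[a,b]]$ instead of the field of Laurent series. First I would set up the transfer matrices associated to the partial numerators of $\eta$. Writing $\cfrac{c}{1+x}$ as the action of $\begin{pmatrix} 0 & c \\ 1 & 1\end{pmatrix}$, the $n$-th convergent of $\eta$ is the ratio of entries of a product $\cM_{t_0}\cM_{t_1}\cdots\cM_{t_{n-1}}$ where $t_0t_1\ldots$ is the Thue--Morse word over $\{a,b\}$ and $\cM_a,\cM_b$ are the corresponding $2\times2$ matrices. As in the proof of Theorem~\ref{mainth}, I would focus on block products of length $4^k$, defining matrices $M_k(a,b)$ by a recursion $M_k(a,b)=M_{k-1}(a,b)\,M_{k-1}(b,a)^2\,M_{k-1}(a,b)$ (and auxiliary $N_k$), and record their entries $P_k,Q_k,R_k$. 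The structural facts that made Theorem~\ref{mainth} work — symmetry of $M_k$ under the $a\leftrightarrow b$ involution, the determinant identity, and the linear relations of Proposition~\ref{PQR} — should all carry over with only the bookkeeping of exponents changed, since the combinatorics of the Thue--Morse prefixes of length a power of $4$ being palindromes is identical.

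Next I would introduce the auxiliary quantity $Z_k$ (the analogue of $U_k+V_k$) and derive, exactly as in Lemma~\ref{ZPQR} and Propositions~\ref{reczk} and~\ref{polzk}, closed expressions for $P_k,Q_k,R_k$ in terms of $Z_k$, a first-order recursion $Z_{k+1}=(\text{monomial})\,Z_k+(\text{lower order})$, and hence a degree-$4$ polynomial relation satisfied by $Z_k$ with coefficients that are explicit polynomials in $a,b$. With the guessed quartic $\eta^4+(a+b+1)\eta^2+(a+b)(a+b+1)\eta+a(a+b)^3+ab=0$ in hand (obtained, as the authors did, by inspecting the small cases produced by the Hu--Han program), I would then form $\delta_k := P_k^4 + (a+b+1)P_k^2Q_k^2 + (a+b)(a+b+1)P_kQ_k^3 + (a(a+b)^3+ab)Q_k^4$ and substitute the $Z_k$-expressions; after simplification — using the degree-$4$ relation for $Z_k$ to cancel the leading terms — $\delta_k$ should factor as a fixed monomial power (the analogue of $(ab)^{2n_k}$, here a power of $ab$ or of the relevant determinant) times a polynomial $T_k(a,b)$ that stays "bounded" in the relevant sense.

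Finally I would translate back: the $4^k$-th convergent $p_{4^k}/q_{4^k}$ of $\eta$ is $P_k/Q_k$ up to a common monomial factor, and $\delta_k$ divided by $q_{4^k}$ to the appropriate power is a power series in $a,b$ whose order (lowest total degree of a nonzero monomial) tends to infinity with $k$. Since, because the Thue--Morse prefixes of length $4^k$ are palindromes, $p_{4^k}/q_{4^k}$ converges to $\eta$ in the $(a,b)$-adic topology at least as fast, the quartic evaluated at $p_{4^k}/q_{4^k}$ converges to the quartic evaluated at $\eta$; as the former tends to $0$, the latter is $0$. That $\eta$ genuinely has degree $4$ and not less follows from the same palindrome argument sketched at the start of Section~\ref{Res} (the continued fraction is not eventually periodic, so degree $\ge 3$, and the simultaneous good approximation of $\eta$ and $\eta^2$ rules out degree $3$).

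The step I expect to be the main obstacle is the explicit verification that $\tau^4\Delta_k$ (or its analogue) absorbs the extra monomial terms so that $\delta_k$ collapses to a single monomial times a bounded polynomial — this is the lengthy $Z_k$-substitution and requires knowing the exact exponent arithmetic in the ring setting (where $\tau=1+a+b$ may not be a unit and one must track $(a,b)$-adic orders rather than degrees). One must also take care that the "bounded constant" argument from the Laurent-series proof is replaced correctly by an order-growth argument in $\K[[a,b]]$, and that nothing in the determinant identity $Q_kR_k+P_k^2=(\text{monomial})$ breaks when $\K$ is only a ring (no inverses) rather than a field; fortunately all the identities used are polynomial identities, valid over $\Z[a,b]$ reduced mod $2$, hence over any ring of characteristic $2$.
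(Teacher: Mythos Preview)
Your proposal is correct and matches exactly what the paper indicates: it states Theorem~\ref{ring} without a detailed proof, saying only that it is established ``by using the same method as in Section~\ref{proofmain} for the proof of Theorem~\ref{mainth}.'' Your outline reproduces that method step by step. One small remark: your worry that $\tau=1+a+b$ might fail to be a unit is unfounded in $\K[[a,b]]$, since it has constant term~$1$; this actually makes the exponent bookkeeping and the final $(a,b)$-adic order argument cleaner than you anticipate.
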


Working in $\K[[a,b,x]]$ and setting $a:=ax$ and $b:=bx$,  Theorem \ref{ring}
implies that Conjecture 1.3 in \cite{HuHan} is true.

\section*{Acknowledgement}    
The authors thank Alain Lasjaunias for useful discussion.



\begin{thebibliography}{99}

\bibitem{AdBu07a}
B. Adamczewski and Y. Bugeaud,
{\it On the complexity of algebraic numbers I. 
Expansions in integer bases},
Ann. of Math. 165 (2007), 547--565.

\bibitem{AdBu07b}
B. Adamczewski and Y. Bugeaud,
{\it A short proof of the transcendence of the Thue-Morse 
continued fractions},
Amer. Math. Monthly 114 (2007), 536--540.


\bibitem{AdFa17}
B. Adamczewski et C. Faverjon, 
{\it M\'ethode de Mahler : relations lin\'eaires, transcendance et applications aux nombres automatiques}, 
Proc. London Math. Soc. 115 (2017), 55--90. 

\bibitem{Agg68}
S. K. Aggarwal,
{\it Transference theorems in the field of formal power series}, 
Monatsh. Math. 72 (1968), 97--106. 



\bibitem{Al88}
J.-P. Allouche,
{\it Sur le d\'{e}veloppement en fraction continue de certaines s\'{e}ries formelles},
C. R. Acad. Sci. Paris S\'{e}r. I Math., 307 (1988), 631--633. 
  
 
\bibitem{AlBeSh89}
J.-P. Allouche, J. B\'etr\'ema et J.O. Shallit, 
{\it Sur des points fixes de morphismes d'un mono\"\i de libre}, 
RAIRO, Inf. Th\'eor. Appl. 23 (1989), 235--249.
 

\bibitem{AlSh03}
J.-P. Allouche and J. Shallit.
Automatic sequences. Theory, Applications, Generalizations. 
Cambridge University Press, Cambridge, 2003. 

 
\bibitem{Allouche2020HN}
 J.-P. Allouche, G.-N. Han, and H. Niederreiter,
{\it Perfect linear complexity profile and apwenian sequences}, 
Finite Fields Appl. 68 (2020), Article 101761, 13 pages.
 

\bibitem{BaSw76}
L. E. Baum and M. M. Sweet,
{\it Continued fractions of algebraic power series in characteristic $2$},
Ann. of Math. 103 (1976), 593--610.



\bibitem{Bu13}
Y. Bugeaud,
{\it Automatic continued fractions are transcendental or quadratic},
Ann. Sci. \'Ecole Norm. Sup. 46 (2013), 1005--1022.


\bibitem{BuDurham}
Y. Bugeaud,
{\it Exponents of Diophantine approximation}.
In: Dynamics and analytic number theory, 96--135, London Math. Soc. Lecture Note Ser., 437, 
Cambridge Univ. Press, Cambridge, 2016.


\bibitem{Chr79}
G. Christol, 
{\it Ensembles presque p\'eriodiques $k$-reconnaissables}, 
Theoret. Comput. Sci. 9 (1979), 141--145. 

\bibitem{CKMFR}
G.~Christol, T.~Kamae, M.~Mend\`es~France et G.~Rauzy.
{\it Suites alg\'{e}briques, automates et substitutions}, 
Bull. Soc. Math. France 108 (1980), 401--419.



\bibitem{Cob68}
A. Cobham,
{\it On the Hartmanis-Stearns problem for a class of tag machines}.
In: Conference Record of 1968 Ninth Annual Symposium on Switching and 
Automata Theory, Schenectady, New York (1968), 51--60.  


\bibitem{Cob72}
A. Cobham,
{\it Uniform tag sequences}, 
Math. Systems Theory 6 (1972), 164--192.

\bibitem{Flajolet1980}
Ph. Flajolet,
{\it Combinatorial aspects of continued fractions},
Discrete Math. 32(2) (1980), 125--161.

\bibitem{Guo2021HW}
Y.-J. Guo, G.-N. Han, and W. Wu, 
{\it Criteria for apwenian sequences}, 
Adv. in Math. 389 (2021),  Paper No. 107899, 37 pages.


\bibitem{Heilermann1846}
J.~B.~H. Heilermann,
{\it \"{U}ber die {V}erwandlung der {R}eihen in {K}ettenbr\"uche},
{J. Reine Angew. Math.} 33 (1846), 174--188.

\bibitem{HuHan}
Y. Hu and G.-N. Han,
{\it On the algebraicity of Thue-Morse and period-doubling continued fractions},
Acta Arith. (2022), 25 pages. To appear.


\bibitem{Kol59}
E. Kolchin,
{\it Rational approximation to solutions of algebraic differential equations},
Proc. Amer. Math. Soc. 10 (1959), 238--244. 


\bibitem{Las00}
A. Lasjaunias, 
{\it A survey of diophantine approximation in fields of power series}, 
Monatsh. Math. 130 (2000), 211--229. 


\bibitem{LasBdM96}
A. Lasjaunias and B. de Mathan, 
{\it Thue's Theorem in positive characteristic}, 
J. Reine Angew. Math. 473 (1996), 195--206. 



\bibitem{LasYao15}
A.~Lasjaunias and J.-Y. Yao,
{\it Hyperquadratic continued fractions in odd characteristic with partial quotients of degree one}, 
J. Number Theory 149 (2015), 259--284.

\bibitem{LasYao16}
A.~Lasjaunias and J.-Y. Yao,
{\it Hyperquadratic continued fractions and automatic sequences}, 
Finite Fields Appl. 40 (2016), 46--60.

\bibitem{LasYao17}
A.~Lasjaunias and J.-Y. Yao,
{\it On certain recurrent and automatic sequences in finite fields}, 
J. Algebra 478 (2017), 133--152. 



\bibitem{Mah49}
K. Mahler,
{\it On a theorem of Liouville in fields of positive characteristic}, 
Canad. J. Math. 1 (1949), 397--400.

\bibitem{MiRo86}
W. H. Mills and D. P. Robbins,
{\it Continued fractions for certain algebraic power series},
J. Number Theory 23 (1986), 388--404.

\bibitem{Mka95}
M. Mkaouar, 
{\it Sur le d\'eveloppement en fraction continue de la s\'erie de Baum et Sweet}, 
Bull. Soc. Math. France 123 (1995), 361--374. 

\bibitem{Os75}
C. Osgood, 
{\it Effective bounds on the ``Diophantine approximation'' of algebraic functions over fields of arbitrary 
characteristic and applications to differential equations}, 
Nederl. Akad. Wetensch. Proc. Ser. A 78 (1975), 105--119. 

\bibitem{Ph15}
P. Philippon,
{\it Groupes de Galois et nombres automatiques}, 
J. London Math. Soc. 92 (2015), 596--614.


\bibitem{Schm00}
W. M. Schmidt, 
{\it  On continued fractions and Diophantine approximation in power series fields}, 
Acta Arith. 95 (2000), 139--166. 

\bibitem{Wall1948}
Hubert~S. Wall.
{\it Analytic {T}heory of {C}ontinued {F}ractions}.
D. Van Nostrand Company, Inc., New York, N. Y., 1948.

\bibitem{Yao97}
J.-Y. Yao, 
{\it Crite\`res de non-automaticit\'e et leurs applications}, 
Acta Arith. 80 (1997), 237--248. 


\end{thebibliography}
\end{document}